\documentclass[12pt]{amsart}

\usepackage{}
\usepackage{tikz}
\usepackage{tikz-cd}
\newtheorem{theorem}{Theorem}
\newtheorem{lemma}{Lemma}
\newtheorem{corollary}{Corollary}
\newtheorem{prop} {Proposition}

\theoremstyle{definition}
\newtheorem{definition}{Definition}

\theoremstyle{remark}
\newtheorem{remark}{Remark}

\newcommand{\C}{\mathbb C}
\newcommand{\hatC}{\widehat{\C}}
\numberwithin{equation}{section}
\newcommand{\T}{Teich\-m\"ul\-ler}

\usepackage{tikz}
\usetikzlibrary{matrix,arrows}
\usepackage[all]{xy}
\usepackage{pdfpages}

\makeatletter
\@namedef{subjclassname@2020}{%
  \textup{2020} Mathematics Subject Classification}
\makeatother

\begin{document}

\title[Teichm\"uller space]{Teichm\"uller space of a closed set in the Riemann sphere}

\author{Xinlong Dong}
\address[Dong] {Department of Mathematics and Computer Science\\
	Kingsborough Community College of the City University of New York\\
	Brooklyn, NY 11235--2398, USA}

\email{Xinlong.Dong@kbcc.cuny.edu}

 \author{Arshiya Farhath.~G}
\address[Farhath]{Department of Mathematics\\
 Graduate Center of the City University of New York\\
 New York, NY 10016, USA}

\email[Farhath]{agulamdasthagir@gradcenter.cuny.edu}

\author{Sudeb Mitra}
\address[Mitra]{Department of Mathematics,
Queens College of the City University of New York\\
Flushing, NY 11367-1597, USA\\
 and\\
 Department of Mathematics\\
 Graduate Center of the City University of New York
 New York, NY 10016, USA}

\email[Mitra]{sudeb.mitra@qc.cuny.edu}

\dedicatory{For Professor Yunping Jiang on his 65th Birthday}

\thanks{The first and third authors are supported by PSC-CUNY research grants.}

\keywords{Holomorphic motions, {\T} spaces, Conformal Naturality, Douady-Earle section, Holomorphic families of Jordan curves}
\subjclass[2020]{Primary 32G15, 30C62; Secondary 37F31.}

\begin{abstract}
The {\T} space of a closed set in the Riemann sphere is a simply connected complex Banach manifold. Its complex structure follows from {\it Lieb isomorphism}. In this paper, we show the conformal naturality of Lieb isomorphism. We then study Douady-Earle section for these {\T} spaces. In particular, we study the real-analyticity of Douady-Earle section for classical {\T} spaces. We give two explicit examples of maximal holomorphic motions over simply connected complex Banach manifolds. As an application of the real-analyticity of the Douady-Earle section for the classical {\T} spaces of Riemann surfaces, we prove a new result showing that a family of Jordan curves varies real-analytically over a simply connected complex Banach manifold and as quasiconformal images of the one at the basepoint, provided that a finite number of marked points on the Jordan curves vary holomorphically over the same parameter space. 

\end{abstract}

\maketitle

\section{Introduction}

{\bf Notations.} We will use the following notation: $\C$ for the complex plane, $\hatC: = \C \cup\{\infty\}$ for the Riemann sphere, and $\Delta$ for the open unit disk $\Delta:= \{z \in \C: \vert z\vert <1\}$. 

\medskip

Associated to each closed set $E$ in $\hatC$, there is a {\T} space $T(E)$, which is a universal parameter space for holomorphic motions of the set $E$. This space was first studied by G.~S.~Lieb in his 1990 Cornell University doctoral dissertation; see \cite{Li}. We give the precise definitions in \S\S3.1 and 3.2. The complex analytic structure of $T(E)$ follows from {\it Lieb isomorphism}; see \S3.4 for the precise definition. Theorem A of our paper shows the conformal naturality of Lieb isomorphism; see \S\S3.5, 3.6, and 3.7. In \S4, we discuss {\it universal holomorphic motion} of the closed set $E$. In \S5, we discuss {\it Douady-Earle section} for classical {\T} spaces, for product {\T} spaces, and for the {\T} space $T(E)$. In particular, we use some facts from \cite{DE} to show the real-analyticity of the Douady-Earle section for classical {\T} spaces. This important fact was not explicitly stated in the paper \cite{DE}. We also include an open question. Finally, we study two applications of the real-analyticity of the Douady-Earle section for some finite-dimensional {\T} spaces. In \S6, we study an explicit example of a {\it maximal holomorphic motion} of a finite set $E$ over its {\T} space, which is Theorem B of our paper (see \S4 for the definition of a maximal holomorphic motion). In \S7, we review several results on holomorphic families of Jordan curves. We then state and prove Theorem C, which is an application of the real-analyticity of the Douady-Earle section for the classical {\T} spaces of Riemann surfaces. Theorem C asserts that a family of Jordan curves varies real-analytically over a simply connected complex Banach manifold and as quasiconformal images of the one at the basepoint, provided that a finite number of marked points on each Jordan curve depend holomorphically on the same parameter space. 

\medskip

{\bf Acknowledgement.} We want to thank the referee for his/her meticulous reading of the paper, and for several extremely important suggestions that greatly helped us to improve our paper. We also want to thank Professor Yunping Jiang for some illuminating and helpful discussions that helped us to clarify several subtle points in this paper.

\section{Quasiconformal mappings}

\begin{definition}A complex-valued function $w=f(z)$ defined on a region $\Omega$ in $\C$ is called a quasiconformal mapping if it is a sense--preserving homeomorphism of $\Omega$ onto its image and its complex distributional derivatives
$$w_z = \frac{1}{2}\Big(\frac{\partial f}{\partial x} - i \frac{\partial f}{\partial y}\Big) \mbox { and } w_{\overline{z}} = \frac{1}{2}\Big(\frac{\partial f}{\partial x} + i \frac{\partial f}{\partial y}\Big)$$
are Lebesgue measurable locally square integrable functions on $\Omega$ that satisfy the inequality $\vert w_{\overline{z}}\vert \leq k \vert w_z\vert$ almost everywhere in $\Omega$, for some real number $k$ with $0\leq k < 1$. 
\end{definition}

\medskip

If $w = f(z)$ is a quasiconformal mapping defined on the region $\Omega$ then the function $w_z$ is known to be nonzero almost everywhere on $\Omega$. Therefore the function
$$\mu_f = \frac{w_{\overline{z}}}{w_z}$$
is a well--defined $L^{\infty}$ function on $\Omega$, called the {\it complex dilatation} or the {\it Beltrami coefficient} of $f$. The $L^{\infty}$ norm of every Beltrami coefficient is less than one. 

\medskip

The positive number 
$$K(f) = \frac{1 + \Vert\mu_f\Vert_{\infty}}{1 - \Vert\mu_f\Vert_{\infty}}$$
is called the {\it dilatation} of $f$. We say that $f$ is $K$-quasiconformal if $f$ is a quasiconformal mapping and $K(f) \leq K$.

\medskip

We call a homeomorphism of $\hatC$ {\it normalized} if it fixes the points $0$, $1$, and $\infty$. 

\medskip

Let $M(\C)$ denote the open unit ball of the complex Banach space $L^{\infty}(\C)$. The basepoint of $M(\C)$ is the zero function.
For each $\mu$ in $M(\C)$, there exists a unique normalized quasiconformal homeomorphism of $\hatC$ onto itself that has Beltrami coefficient $\mu$; we denote this quasiconformal map by $w^{\mu}$. Furthermore, for every fixed $z \in \hatC$, the map $\mu \mapsto w^{\mu}(z)$ of $M(\C)$ into $\hatC$ is holomorphic; see \cite{Ah} and \cite{AB}.

\section{Conformal naturality of Lieb isomorphism}

\subsection{Teichm\"uller space of a closed set in $\hatC$}

The Teichm\"uller space of a closed set $E$ in the Riemann sphere $\hatC$, denoted by $T(E)$, has important applications in the study of holomorphic motions of the set $E$. In this section, we study the conformal naturality of {\it Lieb isomorphism}; see {\bf Theorem A}.

 Let $E$ be a closed set in the Riemann sphere $\hatC$, such that $0$, $1$, and $\infty$ belong to $E$.

\begin{definition}

Two normalized quasiconformal self-mappings $f$ and $g$ of the Riemann sphere $\hatC$
are said to be $E$-equivalent if and only if $f^{-1} \circ g$ is
isotopic to the identity rel $E$. The {\it Teichm\"uller space}
$T(E)$ is the set of all $E$-equivalence classes of normalized
quasiconformal self-mappings of $\hatC$. 

\smallskip

The reader should note that by ``$f^{-1} \circ g$ is isotopic to the identity rel $E$" we mean that, at each stage of the isotopy, $f^{-1} \circ g$ keeps the set $E$ pointwise fixed.

\end{definition}

The basepoint of $T(E)$ is the $E$-equivalence class of the identity
map. 

\subsection{$T(E)$ as a complex manifold}

Let $M(\C)$ be the open unit ball of the complex Banach space
$L^{\infty}(\C)$. Each $\mu$ in $M(\C)$ is the Beltrami coefficient
of a unique normalized quasiconformal map $w^{\mu}$ of
$\hatC$ onto itself. The basepoint of $M(\C)$ is the zero function.

\smallskip

We define the quotient map 
$P_E: M(\C) \to T(E)$ by setting $P_E(\mu)$ equal to the
$E$-equivalence class of $w^{\mu}$, written as $[w^{\mu}]_E$.
Clearly, $P_E$ maps the basepoint of $M(\C)$ to the basepoint of
$T(E)$. 

\smallskip

In his doctoral dissertation (\cite{Li}), G.~Lieb proved that $T(E)$
is a complex Banach manifold such that the projection map $P_E: M(\C)
\to T(E)$ is a holomorphic split submersion. For more details, see \S3.4.

\subsection{Teichm\"uller metric on $T(E)$}

Let $E$ and $\widetilde{E}$ be two closed sets in $\hatC$ such that $0$, $1$, and $\infty$ belong to both $E$ and $\widetilde{E}$. Let $g$ be a M\"obius transformation such that $g(E) = \widetilde{E}$. Let $S = \hatC \setminus E$ and $\widetilde{S} = \hatC \setminus \widetilde{E}$. So we have $g(S) = \widetilde{S}$. 

\smallskip

Let $\{S_i\}$ be the connected components of $S$ and $\{\widetilde{S_i}\}$ be the connected components of $\widetilde{S}$. The map $g$ induces a biholomorphic map 
$$F_g: T(E) \to T(\widetilde{E})$$ as follows:
$$F_g([w^{\mu}]_E) = [\widehat{g} \circ w^{\mu} \circ g^{-1}]_{\widetilde{E}} \qquad \mbox { for } \mu \in M(\C)$$
where $\widehat{g}$ is the unique M\"obius transformation such that $\widehat{g} \circ w^{\mu} \circ g^{-1}$ fixes $0$, $1$, and $\infty$. 

\smallskip

Following \S2.2 in \cite{EGL}, there exists a bijection $\alpha: \mathbb{N} \to \mathbb{N}$ such that $g(S_i) = \widetilde{S}_{\alpha(i)}$ for every $i \in \mathbb{N}$. The M\"obius transformation $g$ induces a biholomorphic map 
$$(\rho_g)_i: Teich(S_i) \to Teich (\widetilde{S}_{\alpha(i)})$$ such that the following diagram commutes:
$$
\begin{array}{ccc}
M(S_i) & {\buildrel (\sigma_g)_i  \over
\longrightarrow} & M(\widetilde{S}_{\alpha(i)})\cr
\downarrow \Phi_i& &\downarrow \widetilde{\Phi}_{\alpha(i)}\cr Teich(S_i) & {\buildrel (\rho_g)_i \over \longrightarrow} &
Teich(\widetilde{S}_{\alpha(i)})
\end{array}
$$
Here, $(\rho_g)_i([w]) = [w \circ g^{-1}]$, where $[w]$ is the {\T} equivalence class of the quasiconformal map $w$ with domain $S_i$ and $[w \circ g^{-1}]$ is the {\T} equivalence class of the quasiconformal map $w \circ g^{-1}$ with domain $\widetilde{S}_{\alpha(i)}$. (And the map $(\sigma_g)_i$ sends the Beltrami coefficient of $w$ to the Beltrami coefficient of $w \circ g^{-1}$.)

\smallskip

The biholomorphic map $\rho_g: Teich(S) \to Teich(\widetilde{S})$ induced by $g$ is then defined by the formula:
$$\rho_g(x) = \{(\rho_g)_i(x_i)\} \qquad \mbox { for all } x = \{x_i\} \in Teich(S) \mbox { and } i \in \mathbb{N}.$$

We also have the standard map $g_{*}: M(E) \to M(\widetilde{E})$ given by 
$$g_{*}(\mu) = \nu \qquad \mbox { where } \mu = (\nu \circ g)\frac{\bar{g'}}{g'}.$$
See Equation (6.5) in \cite{DE}.

\smallskip

Following our discussion in \S3.4, we have the following {\it Lieb maps}: $\mathcal{L}: T(E) \to Teich(E^c) \times M(E)$ and $\widetilde{\mathcal{L}}:T(\widetilde{E}) \to
Teich(\widetilde{E}^c) \times M(\widetilde{E})$. We can now state the first main theorem of this paper. 

\bigskip

{\bf Theorem A.} {\it The following diagram commutes:}
$$
\begin{array}{ccc}
T(E) & {\buildrel \mathcal{L} \over
\longrightarrow} & Teich(E^c) \times M(E)\cr
\downarrow F_g & &\downarrow (\rho_g \times g_{*})\cr T(\widetilde{E}) & {\buildrel \widetilde{\mathcal{L}} \over \longrightarrow} &
Teich(\widetilde{E}^c) \times M(\widetilde{E})
\end{array}
$$

\subsection{Lieb isomorphism} We shall assume that $E$ is a closed set in $\hatC$, and has a nonempty complement $E^c = \hatC \setminus E$. Let $\{S_n\}$ be the connected components of $E^c$. Each $S_n$ is a hyperbolic Riemann surface; let $Teich(S_n)$ denote its Teichm\"uller space.  If the number of components is finite, $Teich(E^c)$ is, by definition, the cartesian product of the spaces $Teich(S_n)$. If there are infinitely many components, we define the {\it product Teichm\"uller space} $Teich(E^c)$ as follows.

\smallskip

For each $n \geq 1$, let $0_n$ be the basepoint of the Teichm\"uller space $Teich(S_n)$, and let $d_n$ be the Teichm\"uller metric on $Teich(S_n)$. Let $M(S_n)$ denote the open unit ball of the complex Banach space $L^{\infty}(S_n)$, for each $n \geq 1$. By definition, the {\it product Teichm\"uller space} $Teich(E^c)$ is the set of sequences $t = \{t_n\}_{n=1}^{\infty}$ such that $t_n$ belongs to $Teich(S_n)$ for each $n \geq 1$, and 
\begin{equation}
\label{eq:pts}
\sup\{d_n(0_n,t_n): n \geq 1\} < \infty.
\end{equation}
The basepoint of $Teich(E^c)$ is the sequence $0 = \{0_n\}$ whose $n$th term is the basepoint of $Teich(S_n)$. 

\smallskip

Let $L^{\infty}(E^c)$ be the complex Banach space of sequences $\mu = \{\mu_n\}$ such that $\mu_n$ belongs to $L^{\infty}(S_n)$ for each $n \geq 1$ and the norm $\Vert\mu\Vert_{\infty} = \sup\{\Vert\mu_n\Vert_{\infty}: n \geq 1\}$ is finite.  Let $M(E^c)$ be the open unit ball of $L^{\infty}(E^c)$. Note that if $\mu$ belongs to $M(E^c)$, then $\mu_n$ belongs to $M(S_n)$ for all $n \geq 1$ (but the converse is false). 

\smallskip

For each $n \geq 1$, let $\Phi_n$ be the standard projection from $M(S_n)$ to $Teich(S_n)$; see \cite{Hubbard} or \cite{Nag} for standard facts on classical {\T} spaces. For $\mu$ in $M(E^c)$, let $\Phi(\mu)$ be the sequence $\{\Phi_n(\mu_n)\}$. It is easy to see that $\Phi(\mu)$ belongs to $Teich(E^c)$, and the map $\Phi$ is surjective. We call $\Phi$ the {\it standard projection} of $M(E^c)$ onto $Teich(E^c)$.  In \cite{Li} it was shown that $Teich(E^c)$ is a complex Banach manifold such that the map $\Phi$ is a holomorphic split submersion (see also \cite{EM}). 

\smallskip

Let $M(E)$ be the open unit ball in $L^{\infty}(E)$. The product $Teich(E^c) \times M(E)$ is a complex Banach manifold. (If $E$ has zero area, then $M(E)$ contains only one point, and $Teich(E^c) \times M(E)$ is then isomorphic to $Teich(E^c)$.)

For $\mu$ in $L^{\infty}(\C)$, let $\mu|E^c$ and $\mu|E$ be the restrictions of $\mu$ to $E^c$ and $E$ respectively. We define the projection map $\widetilde P_E$ from $M(\C)$ to $Teich(E^c) \times M(E)$ by the formula:
$$\widetilde P_E(\mu) = (\Phi(\mu|E^c), \mu|E) \qquad \mbox { for all } \mu \in M(\C).$$

\begin{prop} (Lieb's isomorphism theorem.) For all $\mu$ and $\nu$ in $M(\C)$ we have $P_E(\mu) = P_E(\nu)$ if and only if $\widetilde P_E(\mu) = \widetilde P_E(\nu)$. 
\end{prop}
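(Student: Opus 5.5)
We prove the two implications separately. Write $f = w^{\mu}$ and $g = w^{\nu}$.

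\emph{Forward direction.} Suppose $P_E(\mu) = P_E(\nu)$, so that $h = g^{-1}\circ f$ is isotopic to the identity rel $E$.

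First, $h$ fixes $E$ pointwise. Hence on $E$ the maps $f$ and $g$ agree, as sets and as maps.

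Next we compare Beltrami coefficients on $E$. The set where two quasiconformal maps agree carries equal derivatives almost everywhere. Concretely, $f-g$ vanishes on $E$, and a Sobolev $W^{1,2}_{loc}$ function has zero gradient a.e. on its zero set. Hence $f_z=g_z$ and $f_{\bar z}=g_{\bar z}$ a.e. on $E$, which gives $\mu|E=\nu|E$.

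Finally we compare the components of $E^c$. On each component $S_n$, the restriction of the isotopy from $h$ to the identity rel $E$ restricts to an isotopy of $h|S_n$ to the identity rel the boundary. Here we use that a homeomorphism of $\hatC$ fixing $E$ pointwise maps each $S_n$ onto itself. This is exactly the Teichm\"uller equivalence of $f|S_n$ and $g|S_n$ (the classical definition uses homotopy rel ideal boundary; for quasiconformal maps extending continuously to $\partial S_n$ and fixing it, this is the standard notion). Thus $\Phi_n(\mu|S_n)=\Phi_n(\nu|S_n)$ for all $n$, and so $\widetilde P_E(\mu)=\widetilde P_E(\nu)$.

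\emph{Converse.} Suppose $\widetilde P_E(\mu)=\widetilde P_E(\nu)$. For each $n$ there is a quasiconformal self-map $h_n$ of $S_n$ with the following properties:
\begin{itemize}
\item $f|S_n = g\circ h_n$ up to the equivalence;
\item $h_n$ is isotopic to the identity rel the ideal boundary;
\item $h_n$ extends to the identity on $\partial S_n$, via the boundary behaviour of quasiconformal maps on hyperbolic domains (using the Bers/Earle–McMullen description: $h_n$ lifts to the disk commuting with the Fuchsian group and fixes the limit set pointwise).
\end{itemize}
A cleaner route is to choose the representative
\[
\tilde h = \begin{cases} g^{-1}\circ f & \text{on } E,\\ h_n & \text{on } S_n. \end{cases}
\]
On $E$ we have $\tilde h=\mathrm{id}$, because $f=g$ on $E$. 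Why does $f=g$ on $E$? Equal Teichm\"uller classes on every $S_n$ give equal boundary values, so $f=g$ on $\partial S_n$. Points of $E$ not on any $\partial S_n$ require an approximation argument, or more directly they come from the boundary correspondence. Then $\tilde h$ is a homeomorphism fixing $E$. The uniform bound \eqref{eq:pts} lets us choose the $h_n$ with uniformly bounded dilatation (Teichm\"uller extremal representatives). On $E$ the dilatation condition follows from $\mu|E=\nu|E$. So $\tilde h$ is quasiconformal, and $G=g\circ\tilde h$ has Beltrami coefficient equal to $\mu$ a.e. By uniqueness of normalized solutions, $G=f$.

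It remains to show that $\tilde h$ is isotopic to the identity rel $E$. We glue the isotopies on each $S_n$ (identity on $E$). Continuity at points of $E$ follows from uniform quasiconformality: use Earle–McMullen's canonical isotopy through uniformly quasiconformal maps, e.g.\ $h_n^t$ obtained via Douady–Earle extension or the Teichm\"uller geodesic. Uniformly quasiconformal maps fixing three points form an equicontinuous family, and equal to the identity on $E$; this gives continuity on $\hatC$.

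\emph{Main obstacle.} This last gluing step is where I expect the difficulty to lie. The key obstacle is producing, on infinitely many components simultaneously, isotopies rel boundary with uniformly bounded dilatation that glue continuously across $E$. My plan is to use the Earle–McMullen isotopy theorem, which gives isotopies through uniformly quasiconformal maps canonically. Equicontinuity, together with the uniform bound, then yields continuity of the glued isotopy.
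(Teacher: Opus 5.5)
The paper gives no proof of its own here; it refers to Section 7.9 of [EM], so your argument has to stand on its own. The forward direction is fine in outline, with two points needing justification. First, $h_t(S_n)=S_n$ should come from the isotopy, since tracks of points stay in $E^c$. Second, passing from an isotopy fixing $\partial S_n$ to a homotopy rel the ideal boundary (lifts fixing $S^1$) needs an argument.

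\textbf{The converse has a genuine gap.} The false step is your claim that equal classes in $Teich(S_n)$ force $f=g$ on $\partial S_n$. Teichm\"uller equivalence only provides a conformal map $c_n\colon f(S_n)\to g(S_n)$ with $g^{-1}\circ c_n\circ f$ trivial, and the image domains need not even coincide. For example, take $E=\{\operatorname{Im} z\le 0\}\cup\{\infty\}$, $\nu=0$, and $\mu$ supported in the lower half-plane with $w^{\mu}|_{\mathbb R}\neq\mathrm{id}$. The restrictions to the only component agree, but the boundary values differ.

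That $f=g$ on $E$ (including at interior points of $E$) is the real content of the converse. It can only come from the global Beltrami equation combined with $\mu|E=\nu|E$, so defining $\tilde h=g^{-1}\circ f$ on $E$ and declaring it the identity is circular. The correct order is:
\begin{enumerate}
\item Put $h_n=g^{-1}\circ c_n\circ f|_{S_n}$. This is automatically trivial and $K(f)K(g)$-quasiconformal, so neither \eqref{eq:pts} nor extremal representatives are needed; replacing $h_n$ by another representative would in fact destroy $g\circ h_n=c_n\circ f$.
\item Let $\tilde h=\mathrm{id}$ on $E$ and $\tilde h=h_n$ on $S_n$.
\item Prove that $\tilde h$ is a quasiconformal homeomorphism of $\hatC$.
\item Only then conclude $g\circ\tilde h=f$ from uniqueness of normalized solutions, which yields $f=g$ on $E$ as a consequence.
\end{enumerate}

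\textbf{Step 3 is precisely the missing lemma.} It says: if each $h_n$ is a $K$-quasiconformal self-map of $S_n$ that is trivial in $Teich(S_n)$, then the map equal to $h_n$ on $S_n$ and to the identity on $E$ is a $K$-quasiconformal homeomorphism of $\hatC$.

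Continuity at points of $E$, across infinitely many components, needs a uniform displacement estimate. A lift of $h_n$ is a $K$-quasiconformal self-map of $\Delta$ fixing $S^1$ pointwise, so it moves each point a hyperbolic distance at most $C(K)$; hence $\rho_{S_n}(z,h_n(z))\le C(K)$ for all $n$. Because $0,1,\infty\in E$, comparison with the hyperbolic metric of $\hatC$ minus three suitable points of $E$ shows that hyperbolic balls of radius $C(K)$ in $S_n$ centred at $z$ have spherical diameter tending to $0$ as $z\to E$, uniformly in $n$. Your appeal to equicontinuity of uniformly quasiconformal maps fixing three points presupposes that the glued map is already a quasiconformal homeomorphism of the sphere, so it is circular here.

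Quasiconformality across $E$ is not automatic either, since $E$ need not be removable. Here, however, $\tilde h-\mathrm{id}$ vanishes on $E$ and has $L^2$ derivatives off $E$. It is therefore absolutely continuous on almost every line, which gives the ACL property with $D\tilde h=I$ a.e.\ on $E$.

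Once this lemma is in place, the step you flagged as the main obstacle is routine. Apply the lemma to each time-$t$ map of the Earle--McMullen isotopies, whose dilatation is bounded in terms of $K$ alone. Equicontinuity together with continuity in $t$ for fixed $z$ then gives joint continuity.
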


See Section 7.9 of \cite{EM} for a complete proof. 

\begin{definition} It follows from Proposition 1, that there is a well-defined bijection $\mathcal{L}: T(E) \to Teich(E^c) \times M(E)$ such that $\mathcal{L} \circ P_E = \widetilde P_E$, and $T(E)$ has a unique complex manifold structure such that $P_E$ is a holomorphic split submersion. We call the biholomorphic map $\mathcal{L}$ the {\it Lieb isomorphism}. 
\end{definition}

\subsection{Statement of Theorem A}

Let $E$ and $\widetilde{E}$ be two closed sets in $\hatC$ such that $0$, $1$, and $\infty$ belong to both $E$ and $\widetilde{E}$. Let $g$ be a M\"obius transformation such that $g(E) = \widetilde{E}$. Let $S = \hatC \setminus E$ and $\widetilde{S} = \hatC \setminus \widetilde{E}$. So we have $g(S) = \widetilde{S}$. 

\smallskip

Let $\{S_i\}$ be the connected components of $S$ and $\{\widetilde{S_i}\}$ be the connected components of $\widetilde{S}$. The map $g$ induces a biholomorphic map 
$$F_g: T(E) \to T(\widetilde{E})$$ as follows:
$$F_g([w^{\mu}]_E) = [\widehat{g} \circ w^{\mu} \circ g^{-1}]_{\widetilde{E}} \qquad \mbox { for } \mu \in M(\C)$$
where $\widehat{g}$ is the unique M\"obius transformation such that $\widehat{g} \circ w^{\mu} \circ g^{-1}$ fixes $0$, $1$, and $\infty$. 

\smallskip

Following \S2.2 in \cite{EGL}, there exists a bijection $\alpha: \mathbb{N} \to \mathbb{N}$ such that $g(S_i) = \widetilde{S}_{\alpha(i)}$ for every $i \in \mathbb{N}$. The M\"obius transformation $g$ induces a biholomorphic map 
$$(\rho_g)_i: Teich(S_i) \to Teich (\widetilde{S}_{\alpha(i)})$$ such that the following diagram commutes:
$$
\begin{array}{ccc}
M(S_i) & {\buildrel (\sigma_g)_i  \over
\longrightarrow} & M(\widetilde{S}_{\alpha(i)})\cr
\downarrow \Phi_i& &\downarrow \widetilde{\Phi}_{\alpha(i)}\cr Teich(S_i) & {\buildrel (\rho_g)_i \over \longrightarrow} &
Teich(\widetilde{S}_{\alpha(i)})
\end{array}
$$
Here, $(\rho_g)_i([w]) = [w \circ g^{-1}]$, where $[w]$ is the {\T} equivalence class of the quasiconformal map $w$ with domain $S_i$ and $[w \circ g^{-1}]$ is the {\T} equivalence class of the quasiconformal map $w \circ g^{-1}$ with domain $\widetilde{S}_{\alpha(i)}$. (And the map $(\sigma_g)_i$ sends the Beltrami coefficient of $w$ to the Beltrami coefficient of $w \circ g^{-1}$.)

\smallskip

The biholomorphic map $\rho_g: Teich(S) \to Teich(\widetilde{S})$ induced by $g$ is then defined by the formula:
$$\rho_g(x) = \{(\rho_g)_i(x_i)\} \qquad \mbox { for all } x = \{x_i\} \in Teich(S) \mbox { and } i \in \mathbb{N}.$$

We also have the standard map $g_{*}: M(E) \to M(\widetilde{E})$ given by 
$$g_{*}(\mu) = \nu \qquad \mbox { where } \mu = (\nu \circ g)\frac{\bar{g'}}{g'}.$$
See Equation (6.5) in \cite{DE}.

\smallskip

Following our discussion in \S3.4, we have the following {\it Lieb maps}: 
$$\mathcal{L}: T(E) \to Teich(E^c) \times M(E^c)$$ and 
$$\widetilde{\mathcal{L}}:T(\widetilde{E}) \to
Teich(\widetilde{E}^c) \times M(\widetilde{E}).$$

We can now state the first main theorem of this paper. 

\bigskip

{\bf Theorem A.} {\it The following diagram commutes:}
$$
\begin{array}{ccc}
T(E) & {\buildrel \mathcal{L} \over
\longrightarrow} & Teich(E^c) \times M(E)\cr
\downarrow F_g & &\downarrow (\rho_g \times g_{*})\cr T(\widetilde{E}) & {\buildrel \widetilde{\mathcal{L}} \over \longrightarrow} &
Teich(\widetilde{E}^c) \times M(\widetilde{E})
\end{array}
$$

\subsection{Proof of Theorem A}

For $\mu \in M(\C)$, let $[w^{\mu}] \in T(E)$. We have the biholomorphic map $F_g: T(E) \to T(\widetilde{E})$ given by: 
$$F_g([w^{\mu}]_E) = [\widehat{g} \circ w^{\mu} \circ g^{-1}]_{\widetilde{E}}$$ where $\widehat{g}$ is the unique M\"obius transformation such that $\widehat{g} \circ w^{\mu} \circ g^{-1}$ fixes $0$, $1$, and $\infty$. Let $w^{\nu}: = \widehat{g} \circ w^{\mu} \circ g^{-1}$. We have 
$$F_g([w^{\mu}]_E) = [w^{\nu}]_{\widetilde{E}} \qquad \mbox { where } \mu = (\nu \circ g)\frac{\bar{g'}}{g'}.$$ 
Therefore, $g_*(\mu | E) = \nu| \widetilde{E}$ and  $g_*: M(E) \to M(\widetilde{E}).$

\smallskip

Next, recall from \S3.5 that $g(S_i) = \widetilde{S}_{\alpha(i)}$ for every $i \in \mathbb{N}$. Let $\mu | S_i = \mu_i$ for each $i \in \mathbb{N}$. By the following diagram 
$$
\begin{array}{ccc}
M(S_i) & {\buildrel (\sigma_g)_i  \over
\longrightarrow} & M(\widetilde{S}_{\alpha(i)})\cr
\downarrow \Phi_i& &\downarrow \widetilde{\Phi}_{\alpha(i)}\cr Teich(S_i) & {\buildrel (\rho_g)_i \over \longrightarrow} &
Teich(\widetilde{S}_{\alpha(i)})
\end{array}
$$
it is clear that $(\rho_g)_i (\Phi_i(\mu_i)) = \widetilde\Phi_{\alpha(i)}(\nu_{\alpha(i)})$ for each $i \in \mathbb{N}$. 

\smallskip

Now, 
$$\mathcal{\widetilde{L}}\big(F_g([w^{\mu}]_E\big) = \mathcal{\widetilde{L}}([w^{\nu}]_{\widetilde{E}}) = \big(\widetilde\Phi(\nu | \widetilde{S}), \nu| \widetilde{E}\big)= \big(\{\widetilde{\Phi}_{\alpha(i)}(\nu_{\alpha(i)})\}, \nu| \widetilde{E}\big).$$

Also,
$$(\rho_g \circ \mathcal{L})\big([w^{\mu}]_E\big) = \rho_g\big(\Phi(\mu|S)\big) = \rho_g\big(\{\Phi_i(\mu_i)\}\big) = \{(\rho_g)_i\big(\Phi_i(\mu_i)\big)\} = \{\widetilde\Phi_{\alpha(i)}(\nu_{\alpha(i)})\}.$$

Finally,
$$\Big((\rho_g \times g_*) \circ \mathcal{L}\Big)\big([w^{\mu}]_E\big) = \Big(\{\widetilde\Phi_{\alpha(i)}(\nu_{\alpha(i)})\}, g_*(\mu | E)\Big) = 
\big(\{\widetilde{\Phi}_{\alpha(i)}(\nu_{\alpha(i)})\}, \nu| \widetilde{E}\big).$$

This proves Theorem A. \qed

\subsection{Conformal naturality for $T(E)$}

Let $G$ be a subgroup of PSL$(2,\C)$ and suppose that $E$ is invariant under $G$ (which means, $g(E) = E$ for all $g$ in $G$). We define the following: 
$$\mbox { (i) }T(E)^G = \{\tau \in T(E): F_g(\tau) = \tau\},$$
$$\mbox { (ii) }Teich(S)^G = \{x \in Teich(S): \rho_g(x) = x\}, $$ and 
$$\mbox { (iii) }M(E)^G = \{\mu|E: (\mu \circ g)\frac{\bar{g'}}{g'} = \mu\}.$$ We then have the following

\begin{corollary} Lieb isomorphism is conformally natural, which means, 
$$T(E)^G = Teich(S)^G \times M(E)^G.$$ 
\end{corollary}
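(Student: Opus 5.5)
The corollary should follow directly from Theorem A, applied with $\widetilde{E} = E$ for every $g \in G$. The equality $T(E)^G = Teich(S)^G \times M(E)^G$ is to be read through the Lieb isomorphism. The precise claim is
$$\mathcal{L}\big(T(E)^G\big) = Teich(S)^G \times M(E)^G,$$
where $S = E^c$. Since $g(E) = E$, we have $\widetilde{E} = E$, $\widetilde{S} = S$ and $\widetilde{\mathcal{L}} = \mathcal{L}$. Theorem A then says that, for every $g \in G$,
$$\mathcal{L} \circ F_g = (\rho_g \times g_*) \circ \mathcal{L}.$$
So $\mathcal{L}$ intertwines the action $g \mapsto F_g$ on $T(E)$ with the product action $g \mapsto \rho_g \times g_*$ on $Teich(S) \times M(E)$.

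The key steps are as follows.

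First, I will check that $g_*$ fixes $\mu|E$ exactly when $(\mu\circ g)\frac{\bar{g'}}{g'} = \mu$ on $E$. By definition, $g_*(\mu) = \nu$ with $\mu = (\nu\circ g)\frac{\bar{g'}}{g'}$, so $g_*(\mu) = \mu$ is this condition. Hence the fixed set of $g_*$ on $M(E)$, intersected over $g \in G$, is $M(E)^G$. Note that $g(E) = E$ guarantees the pullback of a function on $E$ is again a function on $E$.

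Second, I will use that $\mathcal{L}$ is a bijection, so that
$$F_g(\tau) = \tau \iff \mathcal{L}(F_g(\tau)) = \mathcal{L}(\tau) \iff (\rho_g \times g_*)(\mathcal{L}(\tau)) = \mathcal{L}(\tau).$$

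Third, writing $\mathcal{L}(\tau) = (x, \mu|E)$, the last condition splits into $\rho_g(x) = x$ and $g_*(\mu|E) = \mu|E$, because $\rho_g \times g_*$ is a product map.

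Taking the intersection over all $g \in G$ then gives $\tau \in T(E)^G$ if and only if $\mathcal{L}(\tau) \in Teich(S)^G \times M(E)^G$. Surjectivity of $\mathcal{L}$ supplies the reverse inclusion, and the identification follows.

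The only delicate point is interpreting the definitions correctly. The definitions of $T(E)^G$ and $Teich(S)^G$ should be read with "for all $g \in G$". The map $\rho_g$ permutes components via the bijection $\alpha$; that is harmless here, since fixedness of $x = \{x_i\}$ means $(\rho_g)_i(x_i) = x_{\alpha(i)}$, which is exactly how $\rho_g$ acts on $Teich(S)$. One should also confirm that $\rho_g$ preserves the boundedness condition (\ref{eq:pts}). It does, because each $(\rho_g)_i$ is induced by a conformal map and is therefore an isometry of Teichm\"uller metrics sending basepoint to basepoint. Beyond these bookkeeping checks I expect no real obstacle, since the substance is already contained in Theorem A.
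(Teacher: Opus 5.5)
Your proposal is correct and follows the same route as the paper, which specializes Theorem A to $\widetilde{E} = E$ (so $\widetilde{\mathcal{L}} = \mathcal{L}$) and reads off the fixed-point sets of $F_g$ and $\rho_g \times g_*$. You also spell out the bookkeeping the paper leaves implicit: bijectivity of $\mathcal{L}$, the splitting of the fixed-point condition for the product map, the intersection over $g \in G$, and the handling of the component permutation $\alpha$.
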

\begin{proof} The corollary follows easily from Theorem A by taking $E = \widetilde{E}$. \end{proof}

\section{Universal holomorphic motion of a closed set.}

\begin{definition}Let $E \subset \hatC$, and let $X$ be a connected Hausdorff space with basepoint $x_0$. 	
	A {\it motion of $E$ over $X$} is a map $\phi: X \times E \to \hatC$ satisfying
	\begin{itemize}
	\item[(i)] $\phi(x_0,z) = z$ for all $z \in E$, and 
        \item[(ii)] for all $x \in X$, the map $\phi(x,\cdot): E \to \hatC$ is injective.
        \end{itemize}
        
        \end{definition}
        
        We say that $X$ is the {\it parameter space} of the motion $\phi$. 
        
        \smallskip

Define $\phi_{x} (\cdot)=\phi(x, \cdot)$ and $\phi^{z} (\cdot)=\phi (\cdot, z)$.
        
        \medskip
        
        We will always assume that $0$, $1$, and $\infty$  belong to $E$ and that the motion $\phi$ is normalized; i.e. $0$, $1$, and $\infty$ are fixed points of the map $\phi_{x}(\cdot)$ for every $x$ in $X$. This can always be achieved after a suitable change of coordinates: choose three distinct points $a$, $b$, $c$ in $E$. For each $x \in X$, let $M_x$ be the M\"obius transformation that maps $\phi_x(a)$, $\phi_x(b)$, $\phi_x(c)$ to $0$, $1$, $\infty$ respectively. Let $\widehat{E} =  M_{x_0}(E)$.  Then the map 
        $\widehat\phi: X \times \widehat{E} \to \hatC$ defined by
  $$\widehat\phi(x, M_{x_0}(z)) = M_x(\phi(x,z))$$
  is a normalized motion.

\medskip

If $E$ is a proper subset of $\widetilde{E}$ and $\phi: X \times E \to \hatC$ and $\widetilde\phi: X \times \widetilde{E} \to \hatC$ are two motions, we say that 
$\widetilde\phi$ {\it extends} $\phi$ if $\widetilde\phi(x,z) = \phi(x,z)$ for all $(x,z) \in X \times E$. 

\medskip

\begin{definition} Let $V$ be a connected complex manifold with basepoint $x_0$. A {\it holomorphic motion of $E$ over $V$} is a motion $\phi: V \times E \to \hatC$  of $E$ over $V$ such that the map
		$\phi^{z}(\cdot): V \to \hatC$ is holomorphic for each $z$ in $E$.
		
		\end{definition}
		
\medskip

As mentioned in the Introduction, holomorphic motions were first studied over the parameter space $\Delta$, motivated by the study of dynamics of rational maps. Let $V = \Delta$. We can think of $\Delta$ as the space of ``time-parameters." 

\medskip

{\bf Example.} Fix $\mu \in L^{\infty}(\C)$ such that $\Vert\mu\Vert_{\infty} = 1$. For each $t$ in $\Delta$, let $\phi(t,z) = w^{t\mu}(z)$ for all $z \in \hatC$, where $w^{t\mu}$ is the (normalized) quasiconformal self-map of $\hatC$ fixing the points $0$, $1$, and $\infty$. Then, by \cite{AB}, the map
$$\phi(t, z) = w^{t\mu}(z)$$
is a holomorphic motion of $\hatC$ over $\Delta$. See also \S1.3 of the Chapter ``A Supplement to Ahlfors's Lectures" by Earle and Kra in \cite{Ah}.

\medskip

The classical $\lambda$-lemma of Ma\~n\'e, Sad, and Sullivan in \cite{MSS} states three surprising properties of holomorphic motions.

\medskip

{\bf The $\lambda$-lemma.} Let $\phi: \Delta \times E \to \hatC$ be a holomorphic motion. Then
\begin{itemize}
	\item[(i)] $\phi$ is a continuous map from $\Delta \times E$ to $\hatC$,
        \item[(ii)] the map $z \mapsto \phi(\lambda, z)$ is the restriction to $E$ of a quasiconformal mapping of the sphere.
        \item[(iii)] $\phi$ extends to a holomorphic motion $\widetilde\phi$ of the closure $\overline{E}$ of $E$.
        \end{itemize}

\medskip

{\bf Slodkowski's extension theorem.} Let $\phi:\Delta \times E \to \hatC$ be a holomorphic motion. Then, there exists a holomorphic motion 
$\widehat\phi: \Delta \times \hatC \to \hatC$, such that $\widehat\phi$ extends $\phi$.

\smallskip

See the papers \cite{AM}, \cite{Chirka}, \cite{GJW}, \cite{Sl}, and the book \cite{Hubbard} for complete proofs. The paper \cite{JMW} gives an alternative proof, using {\T} spaces. 

\medskip

Slodkowski's theorem does not, in general, hold for higher-dimensional parameter spaces. By definition, a holomorphic motion $\phi: B \times E \to \hatC$ is called {\it maximal} if there is no holomorphic motion $\widehat\phi: B \times \widehat E \to \hatC$ such that $E$ is a proper subset of $\widehat E$ and $\widehat\phi$ extends $\phi$. The following example was given in \cite{EM}. 

\medskip

{\bf Example.} Let $B$ be the domain
$$B = \{(\alpha, \beta) \in \C^2: \vert e^{i \alpha}\vert + \vert \beta\vert < 1\}$$
with the basepoint $(i,0)$, and let 
$$E = \{z \in \hatC: \vert z\vert \geq 1\} \cup \{z \in \C: \vert z\vert \leq e^{-1}\}.$$
Then, 
$\phi((\alpha, \beta), z) = z$ if $(\alpha, \beta) \in B, z \in E$, and $\vert z\vert \geq 1$, and 

\smallskip

$\phi((\alpha, \beta), z) = e^{i \alpha}ez + \beta$, if $(\alpha, \beta) \in B, z \in E$, and $\vert z\vert \leq e^{-1}$

\smallskip

defines a maximal holomorphic motion $\phi: B \times E \to \hatC$.

\smallskip

See Theorem 8.1 in \cite{EM}. 

\begin{theorem} Let $E_0$ be any subset of $\hatC$, not necessarily closed; as usual, we assume that $0$, $1$, and $\infty$ belong to $E_0$. Let $E$ be the closure of $E_0$. Let $\phi: V \times E_0 \to \hatC$ be a holomorphic motion where $V$ is a connected complex Banach manifold with a basepoint. There exists a holomorphic motion $\widehat\phi: V \times E \to \hatC$ that extends $\phi$. \end{theorem}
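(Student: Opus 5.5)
We extend $\phi$ to $E$ by taking limits. The key input is the $\lambda$-lemma applied along holomorphic disks in $V$, together with the fact that a holomorphic motion over $\Delta$ is uniformly continuous in $z$ on compact subsets of the parameter disk.

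First fix $x \in V$ and $z \in E$, and choose a sequence $z_n \in E_0$ with $z_n \to z$. Since $V$ is a connected complex Banach manifold, we join $x_0$ to $x$ by a finite chain of holomorphic disks $f_j:\Delta \to V$. Each $f_j$ is a holomorphic map with $f_j(0)$ equal to the previous point of the chain, and $f_j(t_j)$ equal to the next point for some $t_j \in \Delta$. Near each point of $V$ we work in a chart modelled on a Banach space, so small balls are convex and such chains exist.

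Consider the pullback $\phi\circ(f_1\times \mathrm{id}) : \Delta \times E_0 \to \hatC$. Its basepoint is $0$, and by construction it is a holomorphic motion of $E_0$ over $\Delta$. By the $\lambda$-lemma (parts (i) and (iii)) it extends to a holomorphic motion of $\overline{E_0} = E$ over $\Delta$. Moreover, the map $z\mapsto \phi(f_1(t),z)$ is the restriction of a quasiconformal map, hence uniformly continuous on $E_0$. So $\lim_n \phi(f_1(t),z_n)$ exists and is independent of the chosen sequence.

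Iterating along the chain, we renormalise at each stage to the new basepoint. This is allowed because the $\lambda$-lemma only needs injectivity and holomorphy, and the image set $\phi_{f_j(0)}(E_0)$ is just another set. It follows that
$$\widehat\phi(x,z) := \lim_{n\to\infty}\phi(x,z_n)$$
exists for every $x\in V$. The limit does not depend on the chain, because it is defined intrinsically as a limit of the values $\phi(x,z_n)$. Clearly $\widehat\phi$ extends $\phi$ and satisfies $\widehat\phi(x_0,z)=z$.

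It remains to verify two properties.

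\emph{Injectivity.} For a fixed $x$, the map $\widehat\phi(x,\cdot)$ restricted to $E$ is the continuous extension of the restriction of a quasiconformal homeomorphism of $\hatC$ along the chain. This extension is again the restriction of that homeomorphism, since the composition of the quasiconformal maps from the chain steps is a homeomorphism agreeing with $\phi_x$ on $E_0$. Hence it is injective on $E$.

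\emph{Holomorphy of $x\mapsto \widehat\phi(x,z)$.} The functions $\phi^{z_n}$ are holomorphic on $V$ and omit the values $0$, $1$, $\infty$ unless $z_n$ is one of these points. Since $0$, $1$, $\infty$ are fixed by the normalization and $\phi_x$ is injective, $\phi^{z_n}$ can never take the value $0$, $1$ or $\infty$ when $z_n\notin\{0,1,\infty\}$. By Montel's theorem, applied on holomorphic disks, or via the Kobayashi-hyperbolicity of the thrice-punctured sphere, these maps form a normal family. Hence they are locally uniformly equicontinuous, and the pointwise limit is locally uniform. Holomorphy in Banach manifolds is equivalent to continuity plus holomorphy on complex lines, and both properties pass to locally uniform limits, so $\widehat\phi^{z}$ is holomorphic.

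The main obstacle is this local uniformity in infinite dimensions. To handle it, I would use the Schwarz–Pick estimate for maps into $\hatC\setminus\{0,1,\infty\}$, with its hyperbolic metric, on each complex line through a point of a small ball. This gives uniform equicontinuity of $\{\phi^{z_n}\}$ on small balls in $V$. Combined with pointwise convergence, this yields uniform convergence on such balls, which also gives continuity of $\widehat\phi^z$ directly.
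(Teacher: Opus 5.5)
The paper gives no argument for this statement; it is quoted as Theorem~2 of \cite{JM1}. Your proposal supplies a self-contained proof, and its structure is sound. You join $x_0$ to $x$ by a finite chain of affine disks and apply the one-variable $\lambda$-lemma to the re-based motions $(t,w)\mapsto\phi_{f_j(t)}(\phi_{x_{j-1}}^{-1}(w))$ of $\phi_{x_{j-1}}(E_0)$. This shows that each $\phi_x$ is the restriction to $E_0$ of a homeomorphism $H_x$ of $\hatC$. That gives at once:
\begin{itemize}
\item existence of $\lim_n\phi(x,z_n)$, and its independence of the sequence and of the chain;
\item the identity $\widehat\phi_x=H_x|_E$, hence injectivity and normalization.
\end{itemize}
You are leaning on part (ii) of the $\lambda$-lemma, i.e.\ on a Slodkowski/Bers--Royden type extension. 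Injectivity alone could be had more cheaply from Hurwitz's theorem on each disk of the chain: limits of the never-coinciding maps $\phi^{z_n},\phi^{w_n}$ either coincide identically or never, and they differ at $x_0$. Your route has the bonus that every $\widehat\phi_x$ is the restriction of a quasiconformal map.

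There is one false step, in the holomorphy part. In an infinite-dimensional $V$, uniform equicontinuity on a ball plus pointwise convergence does \emph{not} give uniform convergence on that ball, because Arzel\`a--Ascoli needs compactness. For example, on the unit ball of $\ell^2$ the maps $x\mapsto 2+\tfrac12 x_n$ are holomorphic, omit $0,1,\infty$, are equi-Lipschitz, and tend to $2$ pointwise but not uniformly. So ``normal family, hence locally uniform convergence'' is not available.

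You do not need it. The Schwarz--Pick estimate along affine disks in a chart, combined with the fact that the hyperbolic metric of $\hatC\setminus\{0,1,\infty\}$ dominates a constant multiple of the spherical metric, gives a modulus $\omega$ on a small ball with $d_{\hatC}(\phi^{z_n}(y),\phi^{z_n}(y'))\le\omega(\Vert y-y'\Vert)$ for all $n$. Passing to the pointwise limit gives the same bound for $\widehat\phi^{z}$, hence continuity. On each complex line, closed subdisks are compact, so there the convergence is locally uniform and the limit is holomorphic. Continuity plus holomorphy on lines then gives holomorphy of $\widehat\phi^{z}$, after shrinking so that the image lies in one affine chart of $\hatC$; the case $z\in\{0,1,\infty\}$ is trivial since $\widehat\phi^z$ is constant. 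With this repair your proof is complete.
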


\smallskip

See Theorem 2 in \cite{JM1}. It is a generalized $\lambda$-lemma for any connected complex Banach manifold.

\medskip

\begin{remark} Henceforth, in this paper, we will always assume that $E$ is a closed set in $\hatC$, and as usual, $0$, $1$, and $\infty$ are in $E$, and are fixed points of $\phi_x(\cdot)$ for each $x \in V$.
\end{remark}

\subsection{Universal holomorphic motion of $E$.} The {\it universal holomorphic motion $\Psi_E$ of $E$ over $T(E)$} is defined as follows:
$$\Psi_E(P_E(\mu), z) = w^{\mu}(z) \mbox { for } \mu \in M(\C) \mbox { and } z \in E.$$ 
The definition of $P_E$  above guarantees that $\Psi_E$ is well-defined. It is a holomorphic motion since $P_E$ is a holomorphic split submersion and $\mu \mapsto w^{\mu}(z)$ is a holomorphic map from $M(\C)$ to $\hatC$ for every fixed $z$ in $\hatC$ (by Theorem 11 in \cite{AB}). This holomorphic motion is ``universal" in the following sense:

\medskip

\begin{theorem}~\label{hm}
Let $\phi: V \times E \to \hatC$ be a holomorphic motion where $V$ is a simply connected complex Banach manifold with a basepoint $x_0$. Then, there exists a unique basepoint preserving holomorphic map $f: V \to T(E)$ such that $f^*(\Psi_E) = \phi$; which means, $\phi(x,z) = \Psi_E(f(x), z)$ for all $(x,z) \in V \times E$. 
\end{theorem}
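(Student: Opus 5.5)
The plan has two parts: existence of $f$ via a lift to the space of Beltrami coefficients, and uniqueness via the topology of $T(E)$.

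\textbf{Step 1 (lift to $M(\C)$).} The key external input is the extension theorem of Mitra (see \cite{JM1} and Section 7 of \cite{EM}). For a simply connected complex Banach manifold $V$ and a closed set $E$, it says the following. The holomorphic motion $\phi: V\times E\to\hatC$ extends to a continuous motion $\widehat\phi: V\times\hatC\to\hatC$. This extension can be chosen so that each $\widehat\phi_x$ is a normalized quasiconformal map. Moreover, the assignment $x\mapsto \mu(x):=$ Beltrami coefficient of $\widehat\phi_x$ can be chosen so that $x\mapsto P_E(\mu(x))$ is holomorphic.

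A more intrinsic route to the same holomorphy uses Lieb's isomorphism (Proposition 1). On $E$ itself the Beltrami coefficient restricted to $E$ is determined by the motion. On each component $S_n$ of $E^c$, the motion of $\partial S_n\subset E$ determines a point of $Teich(S_n)$ through the boundary values of $\widehat\phi_x$. It then suffices to show that $x\mapsto \mathcal L([\widehat\phi_x]_E)$ is holomorphic into $Teich(E^c)\times M(E)$. One would check this by lifting locally through the split submersion $\widetilde P_E$, using that $\mu\mapsto w^\mu(z)$ is holomorphic \cite{AB}.

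\textbf{Step 2 (define $f$).} Set $f(x)=[\widehat\phi_x]_E = P_E(\mu(x))$. By Step 1 this map is holomorphic. It is basepoint preserving because at $x_0$ we may take $\widehat\phi_{x_0}=\mathrm{id}$. For $z\in E$ we have
\[
\Psi_E(f(x),z)=w^{\mu(x)}(z)=\widehat\phi_x(z)=\phi(x,z),
\]
since both maps are normalized with the same Beltrami coefficient. Hence $f^*(\Psi_E)=\phi$.

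\textbf{Step 3 (uniqueness).} Let $f_1,f_2$ be two such maps. For each $x$, the normalized quasiconformal representatives of $f_1(x)$ and $f_2(x)$ agree on $E$. So $f_1(x)$ and $f_2(x)$ lie in the same fiber of the forgetful map $p:T(E)\to\{\text{restrictions to }E\}$. That fiber is a discrete set: it is an orbit of the mapping classes rel $E$ that are isotopic to the identity rel $E$ only up to $E$, i.e.\ a group acting properly discontinuously (cf.\ the description in \cite{EM}). Both $f_1$ and $f_2$ are continuous lifts through $p$ of the same continuous map $x\mapsto\phi_x|_E$, and they agree at $x_0$. Since $V$ is connected, they agree everywhere.

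To carry this out properly I would show that $p$ has the unique path-lifting property. A homotopy of motions of $E$ over $[0,1]$ that is trivial on $E$ lifts to an isotopy rel $E$, which is essentially the Earle--McMullen isotopy theorem.

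\textbf{Main obstacle.} The hard part is Step 1: producing a global extension whose class in $T(E)$ varies holomorphically on a simply connected $V$. Local existence follows from Slodkowski-type extension along holomorphic disks, but gluing to a global map requires simple connectivity via a monodromy argument. Concretely, one defines $f$ locally on small balls, where local extensions exist. By Step 3 these local definitions differ by deck-like elements of a discrete group. They can therefore be continued along paths, and simple connectivity of $V$ makes the continuation single-valued.

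I would first try to cite Theorem 4 (Mitra's universality) from \cite{JM1} directly. Failing that, I would implement this monodromy argument.
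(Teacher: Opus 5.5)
\paragraph{Existence is not actually proved.} The result you import in Step~1 is an extension of $\phi$ by normalized quasiconformal maps $\widehat\phi_x=w^{\mu(x)}$ with $x\mapsto P_E(\mu(x))$ holomorphic. That already \emph{is} the existence half of Theorem~2: put $f=P_E\circ\mu$. Such extensions over simply connected Banach manifolds are normally obtained \emph{from} Theorem~2, by composing $f$ with the Douady--Earle section. This paper does exactly that in proving Theorem~C and Theorem~4, so the extension cannot be used to prove Theorem~2. Likewise, ``citing Mitra's universality directly'' cites the statement itself, which is all the paper does (it refers to \S14 of \cite{M1}).

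The ``intrinsic route'' does not close the gap. The motion of $\partial S_n$ does not determine a point of $Teich(S_n)$ when $S_n$ is multiply connected. For finite $E$, $E^c$ is one punctured sphere, and the positions of the punctures give only a point of the configuration space, of which $T(E)$ is the universal cover; the marking is exactly what has to be constructed. Also, holomorphy of $\mu\mapsto w^{\mu}(z)$ runs the wrong way: a holomorphic lift to $M(\C)$ yields a holomorphic motion, not conversely.

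\paragraph{The monodromy fallback.} It has the right skeleton, but its key local step is unsupported. Slodkowski's theorem is a one-parameter result. Extending disk by disk through a point gives at best a map holomorphic along those disks, with no control of continuity or holomorphy in other directions of a possibly infinite-dimensional $V$, and your uniqueness step needs continuity. You need to extend the motion, re-based at $x_1$, to a holomorphic motion of $\hatC$ over a full neighborhood of $x_1$ in a Banach chart, for instance by the Bers--Royden harmonic extension on a smaller concentric ball. Its Beltrami coefficients depend holomorphically on $x$ (cf.\ \cite{BR}, \cite{Earle}), and precomposing with a quasiconformal map realizing $\phi_{x_1}$ on $E$ gives a local holomorphic solution.

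\paragraph{Uniqueness.} Your claim that the fibers of $t\mapsto\Psi_E(t,\cdot)$ are discrete is false for general closed $E$. Suppose $E$ contains pairs of points whose mutual distance is arbitrarily small compared with their distance to the rest of $E$. Dehn twists in the resulting annuli of large modulus fix $E$ pointwise and are not isotopic to the identity rel $E$, yet their classes converge to the basepoint of $T(E)$. What works, both for uniqueness and for making the continuation well defined, is path lifting. Given $f_1,f_2$ and a path $\gamma$ from $x_0$ to $x$, lift $f_j\circ\gamma$ to continuous paths $\mu_j$ in $M(\C)$ with $\mu_j(0)=0$, for example using the continuous, basepoint-preserving Douady--Earle section of $P_E$. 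Then $(w^{\mu_1(\tau)})^{-1}\circ w^{\mu_2(\tau)}$ is an isotopy rel $E$ starting at the identity, so $f_1(x)=f_2(x)$.
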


For a proof, see \S14 in \cite{M1}.

\subsection{The case when $E = \hatC$}
When $E = \hatC$, the space $T(\hatC)$ consists of all the
normalized quasiconformal self-mappings of $\hatC$, and the map
$P_{\hatC}$ from $M(\C)$ to $T(\hatC)$ is bijective. We use it to
identify $T(\hatC)$ biholomorphically with $M(\C)$. The pullback
$\widetilde\Psi_{\hatC}$ of $\Psi_{\hatC}$ to $M(\C)$ by
$P_{\hatC}$ satisfies
$$\widetilde\Psi_{\hatC}(\mu,z) = \Psi_{\hatC}(P_{\hatC}(\mu),z) =
  w^{\mu}(z), \qquad (\mu,z) \in M(\C) \times \hatC.$$

Thus, the universal holomorphic motion of $\hatC$ becomes the map
\begin{equation}\label{eq:3.1}
  \Psi_{\hatC}(\mu,z) = w^{\mu}(z)
\end{equation}
from $M(\C)\times\hatC$ to $\hatC$.

\smallskip

In particular, if $\phi: V \times \hatC \to \hatC$ is a
holomorphic motion and $V$ is a simply connected complex Banach manifold, Theorem 2
provides a unique basepoint preserving holomorphic map $f:V \to
M(\C)$ such that
$$\phi(x,z) = \Psi_{\hatC}(f(x),z) = w^{f(x)}(z)$$
for all $x$ in $V$ and $z$ in $\hatC$. In other words, $f(x)$ is
the Beltrami coefficient of the quasiconformal map $\phi_x(z)$ for
each $x$ in $V$.

\begin{remark} The map $f: V \to M(\C)$ that sends $x$ in $V$ to the
Beltrami coefficient of $\phi_x(z)$ is well defined for every
holomorphic motion $\phi: V \times \hatC \to \hatC$. By Theorem
2, it is holomorphic when $V$ is a simply connected complex Banach manifold. It follows
readily that $f$ is holomorphic for every connected complex
manifold $V$. See Theorem 4 in \cite{Earle}. For $V = \Delta$, see
Theorem 2 in \cite{BR}.
\end{remark}

\section{Douady-Earle section}

\subsection{Background}

We give a brief background of the {\it Douady-Earle section} (sometimes called the {\it barycentric section}) for classical {\T} spaces; see \cite{DE} for details. 

\smallskip

Let $M(\Delta)$ denote the open unit ball of the complex Banach space $L^{\infty}(\Delta,\C)$. For each $\mu \in M(\Delta)$, there exists a unique quasiconformal map $f^{\mu}$ of $\overline\Delta$ onto itself fixing the points $1$, $i$, and $-1$. Let $\varphi^{\mu}$ be the restriction of $f^{\mu}$ to the unit circle $S^{1}$. Let $ex(\varphi^{\mu}): \overline\Delta \to \overline\Delta$ denote the Douady-Earle extension of $\varphi^{\mu}$. By Theorem 2 in \cite{DE}, $ex(\varphi^{\mu})$ is quasiconformal and so its complex dilatation belongs to $M(\Delta)$. That determines a map $\sigma: M(\Delta) \to M(\Delta)$ that sends $\mu$ to the Beltrami coefficient of $ex(\varphi^{\mu})$; more precisely, we have:
\begin{equation}
\label{eq:DE}
\sigma: \mu \mapsto ex(\varphi^{\mu})_{\overline z}/ex(\varphi^{\mu})_{z}
\end{equation}
from $M(\Delta)$ to $M(\Delta)$. 
 See \S6 of \cite{DE} for details. In \S6 of \cite{DE} it is shown that the map $\sigma$ is conformally natural. 

\smallskip

Let $G$ be the group of all conformal automorphisms of $\Delta$. Let $\Gamma$ be a Fuchsian group (discrete subgroup of $G$). 
Let $M(\Gamma)$ be the $\Gamma$-invariant elements of $M(\Delta)$, $Teich(\Gamma)$ be the {\T} space of $\Gamma$ and $\pi: M(\Gamma) \to Teich(\Gamma)$ be the usual projection. The following definitions are standard; see \S7 of \cite{DE} for details.
$$M(\Gamma) = \{\mu \in M(\C): \mu = (\mu \circ g)\frac{{\overline g}'}{g'}\}.$$
The {\T} space $Teich(\Gamma)$ is defined by
$$Teich(\Gamma) = \{\varphi \in \mathcal H(S^1); \varphi = \varphi^{\mu} \mbox { for some } \mu \in M(\Gamma)\}.$$
Here, $\mathcal H(S^1)$ denotes the set of all homeomorphisms $\varphi: S^1 \to S^1$. Let $\pi: M(\Gamma) \to Teich(\Gamma)$ be given by $\pi(\mu) = \varphi^{\mu}$.

\smallskip

In Lemma 5 of \cite{DE} it is shown that:

\begin{itemize}

\item[(i)] $\sigma$ maps $M(\Gamma)$ into itself,

\item[(ii)] there is a continuous map $s: Teich(\Gamma) \to M(\Gamma)$ such that $s \circ \pi = \sigma: M(\Gamma) \to M(\Gamma)$, and 

\item[(iii)] $\pi \circ \sigma = \pi: M(\Gamma) \to Teich(\Gamma)$.

\end{itemize}

We call the continuous map $s: Teich(\Gamma) \to M(\Gamma)$ the {\it Douady-Earle section} of $\pi$ for the {\T} space $Teich(\Gamma)$. 

\subsection{Real-analyticity of Douady-Earle section for Teichm\"uller spaces}

Theorem 4 in \cite{DE} shows the extremely important property that the map $\sigma$ is real-analytic. Since $\pi: M(\Gamma) \to Teich(\Gamma)$ is a holomorphic split submersion (which means that $\pi$ has local holomorphic sections), and $\sigma$ is real-analytic, it clearly follows from the discussion in \S5.1 that the map $s: Teich(\Gamma) \to M(\Gamma)$ is real-analytic. 

\begin{remark}
The term {\it Douady-Earle section} was first used in the paper \cite{JM2}. The real-analyticity of the Douady-Earle section is not explicitly stated in the paper \cite{DE}. For universal {\T} space, the real-analyticity was proved in \cite{Earle-Marden}; see Proposition 3.9 of that paper (the authors call it ``barycentric section"). For the reader's convenience, and to make our paper self-contained, we include a discussion for classical {\T} spaces. More details will appear in a forthcoming paper of the third author with Professor Yunping Jiang. 
\end{remark}

\medskip

Let $X$ be a hyperbolic Riemann surface, and let $X = \Delta/\Gamma$ where $\Gamma$ is a uniformizing Fuchsian group of $X$. For any $g$ in Aut$(\Delta)$, set $\Gamma_g = g^{-1}\Gamma g$. Let $g_{*}: M(\Gamma) \to M(\Gamma_g)$ be the induced map defined by
$$g_{*}(\mu) = (\mu \circ g)\frac{\overline{g'}}{g'} \qquad \mbox { for } \mu \in M(\Gamma).$$
It is well-known that $g_{*}$ is biholomorphic and that $g_{*}$ induces a biholomorphic map $\widehat g: Teich(\Gamma) \to Teich(\Gamma_g)$. 

\medskip

As usual, let $M(X)$ denote the open unit ball of Belt$(X)$, the complex Banach space of Beltrami forms on $X$. Let $Teich(X)$ denote
the {\T} space of $X$ and let $\Phi: M(X) \to Teich(X)$ be the standard projection. It is well-known that $\Phi$ is a holomorphic split submersion.
It is standard in {\T} theory that there are canonical biholomorphic maps $\rho: M(X) \to M(\Gamma)$ and $\rho_{*}: Teich(X) \to Teich(\Gamma)$; see, for example, Sections 2.1.4, and 2.2.6 in \cite{Nag} for all details, and, in particular, \S\S3.3, 4, 7 in \cite{E}. 
The paper \cite{E}, based on the author's lectures at Warwick in 1992, and then at the Harish-Chandra Research Institute, India, in January 2025, contains a detailed discussion.

\medskip

We have the standard projections $$\pi: M(\Gamma) \to Teich(\Gamma)$$ and 
$$\pi_g: M(\Gamma_g) \to Teich(\Gamma_g)$$
(which are holomorphic split submersions); let $s_{\pi}$ denote the Douady-Earle section of $\pi$ and let $s_{\pi_g}$ denote the Douady-Earle section of $\pi_g$. By our discussion in \S5.2, both maps $s_{\pi}$ and $s_{\pi_g}$ are real-analytic. 

\medskip

The Douady-Earle section $S: Teich(X) \to M(X)$ is defined as follows: $S = \rho^{-1} \circ s_{\pi} \circ \rho_{*}$. Note that $\Phi \circ S$ is the identity on $Teich(X)$.

\medskip

The following proposition shows that the Douady-Earle section $S: Teich(X) \to M(X)$ is well-defined, which means, it is independent of the covering transformations. 

\begin{prop}
$g_{*} \circ s_{\pi} = s_{\pi_g} \circ \widehat g$. 
\end{prop}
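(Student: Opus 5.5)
The identity will come from the conformal naturality of the Douady--Earle extension, pushed down through the projections $\pi$ and $\pi_g$. Since $\pi: M(\Gamma) \to Teich(\Gamma)$ is surjective, it is enough to show that for every $\mu \in M(\Gamma)$,
$$g_{*}\big(s_{\pi}(\pi(\mu))\big) = s_{\pi_g}\big(\widehat g(\pi(\mu))\big).$$
By Lemma 5 of \cite{DE} (item (ii)) the left side is $g_*(\sigma(\mu))$. By the definition of $\widehat g$ as the map induced by $g_*$ we have $\widehat g \circ \pi = \pi_g \circ g_*$. So the right side is $s_{\pi_g}(\pi_g(g_*\mu)) = \sigma(g_*\mu)$, again by Lemma 5(ii), now applied to $\Gamma_g$. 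The proposition therefore reduces to the identity
$$g_* \circ \sigma = \sigma \circ g_* \quad \mbox{on } M(\Gamma).$$

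To prove this identity, I will use the conformal naturality of the Douady--Earle extension (Theorem 2 and \S6 of \cite{DE}): for $A, B \in \mathrm{Aut}(\Delta)$ and a circle homeomorphism $\varphi$,
$$ex(A \circ \varphi \circ B) = A \circ ex(\varphi) \circ B.$$
Let $f^{\mu}$ be the normalized solution. The map $f^{\mu} \circ g$ has Beltrami coefficient $(\mu\circ g)\overline{g'}/g' = g_*\mu$. Hence $f^{g_*\mu} = A \circ f^{\mu} \circ g$ for the unique $A \in \mathrm{Aut}(\Delta)$ that restores the normalization at $1, i, -1$. Restricting to $S^1$ gives $\varphi^{g_*\mu} = A\circ\varphi^{\mu}\circ g$, and naturality then gives
$$ex(\varphi^{g_*\mu}) = A \circ ex(\varphi^{\mu}) \circ g.$$
Post-composing with the Möbius map $A$ does not change the Beltrami coefficient, while pre-composing with $g$ transforms it by the rule $\nu \mapsto (\nu\circ g)\overline{g'}/g'$. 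Therefore $\sigma(g_*\mu) = g_*(\sigma(\mu))$, which is exactly the conformal naturality of $\sigma$ mentioned in \S6 of \cite{DE}.

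The main obstacle is bookkeeping about definitions rather than analysis. First, one must check that $\widehat g$ really satisfies $\widehat g\circ\pi = \pi_g\circ g_*$ with the paper's description $Teich(\Gamma)=\{\varphi^{\mu}\}$; concretely, $\widehat g(\varphi^{\mu}) = A\circ\varphi^{\mu}\circ g$ with the normalizing $A$. Second, one must check that $g_*$ maps $M(\Gamma)$ into $M(\Gamma_g)$, which follows from the chain rule applied to the invariance relation. Third, the normalization choices for $A$ must be consistent between the map $\widehat g$ and the extension step. Once these are pinned down, the argument is the short diagram chase above. Well-definedness of $S=\rho^{-1}\circ s_\pi\circ\rho_*$ then follows, because changing the uniformizing group by a conjugation $g$ changes $\rho$ and $\rho_*$ by $g_*$ and $\widehat g$ respectively.
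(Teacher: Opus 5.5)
Your proof is correct and follows essentially the same route as the paper. The paper also uses $s_\pi\circ\pi=\sigma$ and $\widehat g\circ\pi=\pi_g\circ g_*$ to reduce the statement to $g_*\circ\sigma=\sigma\circ g_*$, but it cites conformal naturality of $\sigma$ from \S6 of Douady--Earle, whereas you derive it (correctly) from conformal naturality of the Douady--Earle extension.
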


\begin{proof}
We have the following diagram.

\[\begin{tikzcd}
	{M(X)} && {M(\Gamma)} && {M(\Gamma_g)} \\
	\\
	{Teich(X)} && {Teich(\Gamma)} && {Teich(\Gamma_g)}
	\arrow["\rho", from=1-1, to=1-3]
	\arrow["\Phi"', shift right=3, from=1-1, to=3-1]
	\arrow["{g_*}", from=1-3, to=1-5]
	\arrow["\pi"', shift right=3, from=1-3, to=3-3]
	\arrow["{\pi_g}"', shift right=3, from=1-5, to=3-5]
	\arrow["S", shift right=3, from=3-1, to=1-1]
	\arrow["{\rho_*}"', from=3-1, to=3-3]
	\arrow["{s_{\pi}}"', shift right=3, from=3-3, to=1-3]
	\arrow["{\widehat g}"', from=3-3, to=3-5]
	\arrow["{s_{\pi_g}}"', shift right=3, from=3-5, to=1-5]
\end{tikzcd}\]

Let $\tau \in Teich(\Gamma)$, where $\tau = \pi(\mu)$ for $\mu \in M(\Gamma)$. Also, let $g_{*}(\mu) = \widetilde\mu \in M(\Gamma_g)$. 
We have 
$$g_{*}(s_{\pi}(\tau)) = g_{*}(s_{\pi}(\pi(\mu))) = g_{*}(\sigma(\mu)).$$
We also have 
$$s_{\pi_g}(\widehat g(\tau)) = s_{\pi_g}(\widehat g(\pi (\mu))) = s_{\pi_g}(\pi_g(g_{*})) = s_{\pi_g}(\pi_g(\widetilde\mu)) = \sigma(\widetilde\mu).$$
Finally, because of conformal naturality of the map $\sigma$, we have
$$g_{*}(\sigma(\mu)) = (\sigma(\mu) \circ g)\frac{\overline g'}{g'} = \sigma ((\mu \circ g)\frac{\overline g'}{g'}) = \sigma(\widetilde\mu).$$
Hence, we conclude that 
$$g_{*}(s_{\pi}(\tau)) = s_{\pi_g}(\widehat g(\tau)) \qquad \mbox { for } \tau \in Teich(\Gamma).$$
\end{proof}

\subsection{Douady-Earle section for product Teichm\"uller spaces}

We now study Douady-Earle section for the product {\T} space $Teich(E^c)$ defined in \S3.4.

\begin{prop} There is a continuous basepoint preserving map $\widehat{s}$ from $Teich(E^c)$ to $M(E^c)$ such that $\Phi \circ \widehat{s}$ is the identity map on $Teich(E^c)$. \end{prop}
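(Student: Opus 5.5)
We will build $\widehat{s}$ one component at a time from the Douady-Earle sections $S_n\colon Teich(S_n)\to M(S_n)$ constructed in \S5.2, setting
$$\widehat{s}(t)=\{S_n(t_n)\}_{n\geq 1}\qquad\mbox{for } t=\{t_n\}\in Teich(E^c).$$
Three things then have to be checked. The sequence must lie in $M(E^c)$, so we need $\sup_n\Vert S_n(t_n)\Vert_\infty<1$. The map must be basepoint preserving with $\Phi\circ\widehat{s}=\mathrm{id}$. And it must be continuous for the manifold topology of $Teich(E^c)$.

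The section property is immediate. Each $S_n$ satisfies $\Phi_n\circ S_n=\mathrm{id}$, so $\Phi(\widehat{s}(t))=\{\Phi_n(S_n(t_n))\}=t$. For the basepoint, $\sigma(0)=0$ because the Douady-Earle extension of the identity of $S^1$ is the identity, so $S_n(0_n)=0$ for every $n$.

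The key quantitative input is a uniform bound from \cite{DE}. If $\varphi$ is the boundary value of a $K$-quasiconformal map, then $ex(\varphi)$ is $K'$-quasiconformal with $K'$ depending only on $K$. This is the universal estimate in Theorem 2 of \cite{DE}, and by conformal naturality it does not depend on the Fuchsian group. Now take $t$ with $\sup_n d_n(0_n,t_n)=D<\infty$, as in (\ref{eq:pts}). Each $t_n$ is represented by some $\mu_n$ with $\Vert\mu_n\Vert_\infty\leq\tanh D$ (up to any $\epsilon>0$, by taking near-extremal representatives), so $\Vert S_n(t_n)\Vert_\infty\leq k(D)<1$ for all $n$. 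Hence $\widehat{s}(t)\in M(E^c)$.

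Continuity is the main obstacle, because the individual continuity of each $S_n$ gives no uniformity in $n$. I would first try to prove continuity of $\sigma$ uniformly. The aim is the following statement: if $\mu,\nu\in M(\Delta)$ have $\Vert\mu\Vert,\Vert\nu\Vert\leq k<1$ and the boundary maps $\varphi^\mu,\varphi^\nu$ are close in the Teichm\"uller sense, then $\Vert\sigma(\mu)-\sigma(\nu)\Vert_\infty$ is small, uniformly. By conformal naturality this reduces to an estimate at the origin, after conjugating by a M\"obius map sending any point to $0$. For this I would use the explicit dependence of $ex(\varphi)$ on the measure $\varphi_*(|d\zeta|)$ via the barycentre, as in the proof of the real-analyticity in \cite{DE} (Theorem 4 there), which is uniform on balls of radius $<1$.

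Alternatively, one can argue via the Teichm\"uller distance. Suppose $t^{(j)}\to t$ in $Teich(E^c)$. Then $\sup_n d_n(t^{(j)}_n,t_n)\to 0$, so after composing with the maps representing $t$ we get equicontinuity on each component. By conformal naturality, $S_n(t^{(j)}_n)-S_n(t_n)$ is controlled by the same universal modulus of continuity for every $n$. Taking the supremum over $n$ then gives $\Vert\widehat{s}(t^{(j)})-\widehat{s}(t)\Vert_\infty\to 0$. This is the plan; the remaining work is making the uniform modulus of continuity of $\sigma$ precise.
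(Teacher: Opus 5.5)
Your construction $\widehat{s}(t)=\{S_n(t_n)\}$, your section and basepoint checks, and your uniform bound $\sup_n\Vert S_n(t_n)\Vert_\infty\le c(k)<1$ all agree with the paper, which takes $c(k)$ from Proposition 7 of \cite{DE}. The genuine gap is continuity: you leave it as a plan, and neither of your sketches is an argument.

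Your second route is also set up less efficiently than it needs to be. The topology on $Teich(E^c)$ is the one for which $\Phi$ is a holomorphic split submersion. So even the claim that $t^{(j)}\to t$ forces $\sup_n d_n(t^{(j)}_n,t_n)\to 0$ already requires local sections of $\Phi$. You would then still have to turn Teichm\"uller-closeness of boundary maps into $L^\infty$-closeness of the Douady--Earle coefficients.

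The paper takes a shorter path. It sets $\sigma(\mu)=\{\sigma_n(\mu_n)\}$ on $M(E^c)$ and uses that $\sigma\colon M(E^c)\to M(E^c)$ is continuous (Proposition 7.11 of \cite{EM}). It notes $\sigma=\widehat{s}\circ\Phi$. Since $\Phi$ has a local holomorphic section $\chi$ near every point, $\widehat{s}=\sigma\circ\chi$ locally, so $\widehat{s}$ is continuous.

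The analytic input behind continuity of $\sigma$ on $M(E^c)$ is exactly what your proposal does not supply. Each $\sigma_n$ is the restriction of the single Douady--Earle map $\sigma$ on $M(\Delta)$. You therefore need $\sigma$ to be uniformly continuous in $L^\infty$ on every ball $\{\Vert\mu\Vert_\infty\le k\}$, $k<1$.

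Pointwise real-analyticity (Theorem 4 of \cite{DE}) does not give uniformity on such a non-compact ball. Conformal naturality explains why one modulus would serve every $n$, but it does not produce the modulus. It only reduces comparing $\sigma(\mu)$ and $\sigma(\nu)$ at a point $z$ to comparing the Beltrami coefficients at $0$ of $ex(\psi)$ and $ex(g\circ\psi)$. Here $\psi$ is the boundary value of a $K$-quasiconformal self-map of $\Delta$ normalized by $ex(\psi)(0)=0$. The map $g$ is the boundary value of a M\"obius conjugate of $f^{\nu}\circ(f^{\mu})^{-1}$, whose complex dilatation has norm at most $\Vert\mu-\nu\Vert_\infty/(1-k^2)$.

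The uniformity then has to come from three facts:
\begin{enumerate}
\item the normalized family of such $\psi$ is compact;
\item after M\"obius normalization, $g$ is uniformly close to the identity;
\item the dilatation of $ex(\psi)$ at $0$ depends continuously on $\psi$ in the uniform topology.
\end{enumerate}
Carry this argument out, or cite it, and then finish with the local-section step.
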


\begin{proof} Let $\tau \in Teich(E^c)$ where $\tau = \Phi(\mu)$ for $\mu \in M(E^c)$. Following our discussion in \S3.4, $\tau = \{\tau_n\}$ where $\tau_n = \Phi_n(\mu_n) \in Teich(S_n)$ where $\mu_n \in M(S_n)$ for each $n \geq 1$. By Lemma 5 in \cite{DE} (also the above discussion), for each $n \geq 1$, there is a continuous basepoint preserving map $\widehat{s_n}$ from $Teich(S_n)$ into $M(S_n)$ such that $\Phi_n \circ \widehat{s_n}$ is the identity map on $Teich(S_n)$. Let $\sigma_n$ denote the continuous map $\widehat{s_n} \circ \Phi_n$ from $M(S_n)$ to itself.

Since $\tau \in Teich(E^c)$, we have by (3.1), $\sup\{d_n(0_n,\tau_n): n \geq 1\} < \infty$. Let $\Vert\mu_n\Vert_{\infty} \leq k$ for all $n \geq 1$. Then, by Proposition 7 in \cite{DE}, there exists $0 \leq c(k) < 1$, where $c(k)$ depends only on $k$ and is independent of $n$, such that
$$\Vert\sigma_n(\mu_n)\Vert_{\infty} \leq c(k) \mbox { for all } n \geq 1.$$
Define $\sigma(\mu): = \{\sigma_n(\mu_n)\}$. It is easy to check that $\sigma$ is a continuous map of $M(E^c)$ into itself (see, for example, Proposition 7.11 in \cite{EM}). Furthermore, there exists a unique well defined map $\widehat{s}$ from $Teich(E^c)$ to $M(E^c)$ such that $\sigma = \widehat{s} \circ \Phi$. Since $\sigma$ is continuous and $\Phi$ is a holomorphic split submersion, it follows that $\widehat{s}$ is continuous. It is easy to check that $\Phi \circ \widehat{s}$ is the identity map on $Teich(E^c)$.
\end{proof}

\begin{definition} The map $\widehat{s}$ from $Teich(E^c)$ to $M(E^c)$ is called the {\it Douady-Earle section} of $\Phi$ for the product {\T} space $Teich(E^c)$. \end{definition}

\subsection{Douady-Earle section for $T(E)$}

Finally, we introduce the Douady-Earle section for the {\T} space of the closed set $T(E)$ defined in \S\S 3.1, 3.2.

\begin{prop}  There is a continuous basepoint preserving map $s$ from $T(E)$ to $M(\C)$ such that $P_E \circ s$ is the identity map on $T(E)$.  \end{prop}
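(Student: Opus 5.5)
The plan is to build $s$ out of the Lieb isomorphism $\mathcal{L}$ from Definition 3.4 and the Douady-Earle section $\widehat{s}$ of Proposition 3, by gluing a Beltrami coefficient on $E^c$ to one on $E$. Concretely, for $\tau \in T(E)$ write $\mathcal{L}(\tau) = (t, \nu)$ with $t \in Teich(E^c)$ and $\nu \in M(E)$. We have $\widehat{s}(t) = \{\widehat{s}(t)_n\} \in M(E^c)$. Define $s(\tau) \in L^{\infty}(\C)$ by
$$s(\tau)|S_n = \widehat{s}(t)_n \quad (n \geq 1), \qquad s(\tau)|E = \nu .$$

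First we check that $s(\tau)$ lies in $M(\C)$. The components $S_n$ and the set $E$ partition $\C$ up to the point $\infty$, so $s(\tau)$ is a well-defined measurable function. Its norm is
$$\Vert s(\tau)\Vert_\infty = \max\{\Vert \widehat{s}(t)\Vert_{\infty}, \Vert\nu\Vert_\infty\}.$$
Both terms are strictly less than $1$, since $\widehat{s}(t) \in M(E^c)$ (the sup over $n$ is $<1$) and $\nu \in M(E)$. So $s(\tau) \in M(\C)$.

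Next we show $P_E \circ s = \mathrm{id}$. By definition, $s(\tau)|E^c = \widehat{s}(t)$ and $s(\tau)|E = \nu$. Hence
$$\widetilde P_E(s(\tau)) = (\Phi(\widehat{s}(t)), \nu) = (t, \nu) = \mathcal{L}(\tau),$$
because $\Phi \circ \widehat{s} = \mathrm{id}$ by Proposition 3. Since $\mathcal{L} \circ P_E = \widetilde P_E$ and $\mathcal{L}$ is a bijection (Proposition 1 and Definition 3.4), we get $P_E(s(\tau)) = \tau$.

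For the basepoint, $\mathcal{L}$ sends the basepoint of $T(E)$ to $(0, 0)$, and $\widehat{s}$ is basepoint preserving. So $s$ maps the basepoint to the zero function.

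Continuity is the only real point to check. The map $s$ factors as
$$s = J \circ (\widehat{s} \times \mathrm{id}_{M(E)}) \circ \mathcal{L},$$
where $J: M(E^c) \times M(E) \to M(\C)$ is the gluing map $J(\{\mu_n\}, \nu) = \mu$ with $\mu|S_n = \mu_n$ and $\mu|E = \nu$. The map $\mathcal{L}$ is biholomorphic, hence continuous. The map $\widehat{s}$ is continuous by Proposition 3. The map $J$ is the restriction of an isometric linear isomorphism
$$L^{\infty}(E^c) \times L^{\infty}(E) \to L^{\infty}(\C)$$
under the sup norm, since the $L^\infty$ norm of the glued function is exactly the sup of the norms of its pieces. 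So $J$ is continuous, and $s$ is a composition of continuous maps.

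The main subtlety is being careful that $Teich(E^c) \times M(E)$ carries the product manifold topology, and that the norm on $M(E^c)$ is the uniform sup over components. That uniform sup is exactly what makes the glued coefficient have norm $<1$ and makes $J$ continuous. The uniform bound $c(k)$ from Proposition 7 in \cite{DE}, already used inside Proposition 3, is what guarantees $\widehat{s}(t) \in M(E^c)$ rather than merely $\widehat{s}(t)_n \in M(S_n)$ for each $n$.
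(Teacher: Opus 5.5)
Your proposal is correct and follows the paper's proof: the paper defines $s = \widetilde{s}\circ\mathcal{L}$, where $\widetilde{s}(\tau,\nu)$ equals $\widehat{s}(\tau)$ on $E^c$ and $\nu$ on $E$, which is exactly your $J\circ(\widehat{s}\times \mathrm{id}_{M(E)})\circ\mathcal{L}$. You additionally spell out details the paper leaves implicit: the norm of the glued coefficient, the continuity of the gluing map, the basepoint, and the use of injectivity of $\mathcal{L}$ to get $P_E\circ s=\mathrm{id}$.
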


\begin{proof} By Proposition 3, there is a continuous basepoint preserving map $\widehat{s}$ from $Teich(E^c)$ to $M(E^c)$ such that $\Phi \circ \widehat{s}$ is the identity map on $Teich(E^c)$. Let $\widetilde{s}$ be the map from $Teich(E^c) \times M(E)$ to $M(\C)$ such that $\widetilde{s}(\tau, \nu)$ equals $\widehat{s}(\tau)$ in $E^c$ and equals $\nu$ in $E$ for each $(\tau, \nu)$ in $Teich(E^c) \times M(E)$. Clearly, $\widetilde{P}_E \circ \widetilde{s}$ is the identity map on $Teich(E^c) \times M(E)$.
We define $s = \widetilde{s} \circ \mathcal L$, where $\mathcal L$ is the biholomorphic map from $T(E)$ to $Teich(E^c) \times M(E)$ given in Definition 3. It follows that $s: T(E) \to M(\C)$ is a continuous basepoint preserving map such that $P_E \circ s$ is the identity map on $T(E)$. \end{proof}

\begin{definition} The map $s$ from $T(E)$ to $M(\C)$ is called the {\it Douady-Earle section} of $P_E$ for the {\T} space $T(E)$.
\end{definition}

Since $M(\C)$ is contractible, we have the following

\begin{corollary} The Teichm\"uller space $T(E)$ is contractible. \end{corollary}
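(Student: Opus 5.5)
The plan is to deduce contractibility of $T(E)$ from the Douady-Earle section of Proposition 4 together with a straight-line contraction of $M(\C)$. Let $s: T(E) \to M(\C)$ be the continuous basepoint preserving map of Proposition 4, so that $P_E \circ s = \mathrm{id}_{T(E)}$ and $s$ sends the basepoint of $T(E)$ to the zero function. The idea is that $T(E)$ is then a retract of $M(\C)$ via $P_E$, and a contraction of $M(\C)$ pushes forward to a contraction of $T(E)$.

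Concretely, I would define $H: T(E) \times [0,1] \to T(E)$ by
$$H(\tau, t) = P_E\big(t\, s(\tau)\big).$$
This is well defined because $M(\C)$ is the open unit ball of $L^{\infty}(\C)$, so $\Vert t\,s(\tau)\Vert_{\infty} \leq \Vert s(\tau)\Vert_{\infty} < 1$ for $t\in[0,1]$. The map $(\mu,t) \mapsto t\mu$ is jointly continuous from $M(\C)\times[0,1]$ to $M(\C)$ in the norm topology. The section $s$ is continuous by Proposition 4, and $P_E$ is continuous, being a holomorphic split submersion. Hence $H$ is continuous.

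It remains to check the endpoints. At $t=1$ we get $H(\tau,1) = P_E(s(\tau)) = \tau$. At $t=0$ we get $H(\tau,0) = P_E(0)$, which is the basepoint $[\mathrm{id}]_E$. So $H$ is a homotopy from the constant map to the identity, and $T(E)$ is contractible. As a bonus, $H(\text{basepoint},t) = P_E(0)$ for every $t$, since $s$ preserves the basepoint, so the contraction even fixes the basepoint.

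There is no real obstacle here. The substantive input is the continuity of $s$, which Proposition 4 already provides; it rests on the uniform bound $c(k)$ from Proposition 7 of \cite{DE} used in Proposition 3. The only point that needs care is that $t\,s(\tau)$ stays inside the open ball, and that is immediate since $M(\C)$ is convex and star-shaped about $0$.
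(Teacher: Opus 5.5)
Your proof is correct and is essentially the paper's argument made explicit. The paper only remarks that $M(\C)$ is contractible, relying on Proposition 4 to make $T(E)$ a retract of $M(\C)$ via the continuous section $s$ with $P_E \circ s = \mathrm{id}$; your homotopy $H(\tau,t) = P_E(t\,s(\tau))$ is exactly the contraction that this retraction argument produces.
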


\begin{remark} Let $t \in T(E)$ and $P_E(\mu) = t$ for $\mu \in M(\C)$. By Proposition 1, we have
$$\mathcal L(t) = \mathcal L(P_E(\mu)) = \widetilde{P_E}(\mu) = \Big(\Phi(\mu|E^c), \mu|E\Big).$$ Let $\Phi(\mu|E^c)$ be denoted by $\tau$. By Proposition 3, $s(t) = \widetilde{s}(\mathcal L(t))$, which equals $\widehat{s}(\tau)$ on $E^c$, and equals $\mu$ on $E$. By Proposition 3, we have
$\widehat{s}(\tau) = \widehat{s}(\Phi(\mu|E^c)) = \sigma(\mu) \mbox { on } E^c$. Thus, for $t  = P_E(\mu)$ in $T(E)$, $s(t)$ equals $\sigma(\mu)$ on $E^c$ and equals $\mu$ on $E$. If $\Vert\mu\Vert_{\infty} = k$, then $\Vert s(t)\Vert_{\infty} \leq$ max$(k, c(k))$ where $c(k)$ depends only on $k$, and $0 \leq c(k) < 1$.
\end{remark}

\begin{corollary} For $t$ in $T(E)$, $\Vert s(t)\Vert_{\infty}$ is bounded above by a number between $0$ and $1$, that depends only on $d_{T(E)}(0,t)$.  \end{corollary}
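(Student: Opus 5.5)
The plan is to combine the estimate in Remark 2 with the standard description of the Teichm\"uller metric on $T(E)$: every point can be represented by a Beltrami coefficient whose norm is controlled by its distance to the basepoint. Here $d_{T(E)}$ is the Teichm\"uller metric on $T(E)$, i.e.\ the quotient metric coming from $M(\C)$. Concretely,
$$d_{T(E)}(0,t) = \inf\Big\{\tfrac12\log\frac{1+\Vert\mu\Vert_\infty}{1-\Vert\mu\Vert_\infty} : \mu\in M(\C),\ P_E(\mu)=t\Big\}.$$
With this convention, if $d_{T(E)}(0,t) = d$, then for any $\varepsilon>0$ there is $\mu$ with $P_E(\mu)=t$ and $\Vert\mu\Vert_\infty \le k(d+\varepsilon)$, where $k(r) = \tanh(r)$. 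The specific function does not matter; we only need $k(r)<1$ and $k$ increasing in $r$. If the paper instead uses the Kobayashi metric or another equivalent description, it is known (Earle--Mitra, \cite{EM}) to coincide with this quotient metric. I would cite that identification rather than reprove it.

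Step one: fix $t\in T(E)$ and set $d = d_{T(E)}(0,t)$. Choose $\mu$ with $P_E(\mu)=t$ and $\Vert\mu\Vert_\infty \le k_0 := \tanh(d+1)$; this is the case $\varepsilon=1$ above. Step two: apply Remark 2. It gives that $s(t)$ equals $\sigma(\mu)$ on $E^c$ and $\mu$ on $E$. Moreover, since each component satisfies $\Vert\mu_n\Vert_\infty \le k_0$, Proposition 7 in \cite{DE} gives $\Vert\sigma_n(\mu_n)\Vert_\infty \le c(k_0)$ uniformly in $n$. Therefore
$$\Vert s(t)\Vert_\infty \le \max\big(k_0, c(k_0)\big) =: C(d) < 1.$$

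One point needs checking. The constant $c(k)$ in Remark 2 must be usable for every $\mu_n$ with norm at most $k$, and not just norm exactly $k$. Proposition 7 of \cite{DE} is stated as a bound under $\Vert\mu\Vert_\infty\le k$, so this holds. Also, $s(t)$ depends only on $t$ and not on the choice of $\mu$, so the bound is a genuine function of $d$ alone. To make $C$ monotone in $d$, replace $c(k)$ by $\sup_{k'\le k} c(k')$. This supremum is still less than $1$, because it is bounded by $c(k)$ whenever Proposition 7 is applied with the hypothesis $\le k$.

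The main obstacle is purely definitional: confirming that $d_{T(E)}$ is, or dominates, the quotient Teichm\"uller metric. Only that makes a small distance yield a representative $\mu$ of controlled norm. I would handle it by appealing to the identification in \cite{EM} of the Teichm\"uller and Kobayashi metrics on $T(E)$; everything else is immediate from Remark 2.
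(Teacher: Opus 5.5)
Your proof is correct and follows the paper's argument: choose a representative $\mu$ of $t$ whose norm is controlled by $d_{T(E)}(0,t)$, then apply the bound $\Vert s(t)\Vert_\infty \le \max(k, c(k))$ from the remark you call Remark 2 (it is Remark 4 in the paper). The only difference is that the paper takes an extremal $\mu$, so $k=\tanh d_{T(E)}(0,t)$ exactly, whereas your near-extremal choice $k_0=\tanh(d+1)$ gives a slightly weaker constant but does not need the existence of extremal representatives.
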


\begin{proof} Given $t$ in $T(E)$, choose an extremal $\mu$ in $M(\C)$ so that $P_E(\mu) = t$. Then
$$d_{T(E)}(0,t) = \frac{1}{2}\log K \mbox { where } K = \frac{1+k}{1-k} \mbox { and } k = \Vert\mu\Vert_{\infty}.$$
By Remark 4, we have $\Vert s(t)\Vert_{\infty} \leq$ max$(c(k), k)$. \end{proof}

\subsection{Conformal naturality of the Douady-Earle section for $T(E)$} 

For this paper, we do not need the conformal naturality of the Douady-Earle section for $T(E)$; but it is worth including a brief discussion. See \S3.4 in \cite{JM2} for the details.

As in \S3.7, let $G$ be a group of M\"obius transformations that map $E$ onto
itself. For each $g$ in $G$, there exists a biholomorphic map
$F_g: T(E) \to T(E)$ which is defined as follows: for each $\mu$ in
$M(\C)$,
\begin{equation}
\label{eq:egl}
F_g([w^{\mu}]_E) = [\widehat g \circ w^{\mu} \circ
g^{-1}]_E
\end{equation}
where $\widehat g$ is the unique M\"obius transformation such that
$\widehat g \circ w^{\mu} \circ g^{-1}$ fixes the points 0, 1, and
$\infty$. See Remark 3.4 in \cite{EGL} for a discussion on
``geometric isomorphisms" of $T(E)$.

\smallskip

It follows from the definition that, for each $g$ in $G$, $F_g$ is
basepoint preserving.

\begin{definition}
We define $M(\C)^G$ and $T(E)^G$ as follows:
$$M(\C)^G:= \{\mu \in M(\C): (\mu \circ g)\frac{\bar{g'}}{g'} = \mu \mbox { a.e.} \mbox { on } \C \mbox { for each } g \in G\}$$ and
$$T(E)^G: = \{t \in T(E): F_g(t) = t \mbox { for each } g \in G\}.$$
\end{definition}

The next proposition shows the conformal naturality of the Douady-Earle section $s: T(E) \to M(\C)$. As we will see, it has important applications in holomorphic motions.

\begin{prop} If $t \in T(E)^G$, then $s(t) \in M(\C)^G$. \end{prop}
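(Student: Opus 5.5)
We have to show that $s(t)$ is $G$-invariant on $E$ and on $E^c$ separately, since by Remark 4 the section is built piecewise: for $t=P_E(\mu)$, $s(t)=\sigma(\mu)$ on $E^c$ and $s(t)=\mu$ on $E$. The tool is Theorem A with $\widetilde E=E$. It gives $\mathcal L\circ F_g=(\rho_g\times g_*)\circ\mathcal L$ for each $g\in G$. So if $F_g(t)=t$ and $\mathcal L(t)=(\tau,\nu)$, then $\rho_g(\tau)=\tau$ and $g_*(\nu)=\nu$. This is exactly Corollary 1: $\tau\in Teich(E^c)^G$ and $\nu\in M(E)^G$.

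The $E$-part is then immediate. On $E$ we have $s(t)=\nu$, and $g_*(\nu)=\nu$ means $(\nu\circ g)\bar{g'}/g'=\nu$ a.e. on $E$, because $g(E)=E$.

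The $E^c$-part needs $\widehat s(\tau)$ to be $G$-invariant whenever $\rho_g(\tau)=\tau$. The key claim is that the Douady-Earle section on product {\T} spaces is natural under $\rho_g$:
\begin{equation*}
\widehat s(\rho_g(\tau)) = \{(\sigma_g)_i\}(\widehat s(\tau)).
\end{equation*}
Here the right-hand side means that the $i$th component $\widehat{s_i}(\tau_i)$, pushed forward by $g|_{S_i}:S_i\to S_{\alpha(i)}$ via $(\sigma_g)_i$, equals $\widehat{s}_{\alpha(i)}(\tau_{\alpha(i)})$. Granting this, $\rho_g(\tau)=\tau$ forces $\widehat s(\tau)$ to be $g$-invariant, component by component. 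Gluing this with the $E$-part gives $(s(t)\circ g)\bar{g'}/g'=s(t)$ a.e. on $\C$, that is, $s(t)\in M(\C)^G$.

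The main obstacle is the componentwise naturality, namely
\begin{equation*}
(\sigma_g)_i\circ\widehat{s_i}=\widehat{s}_{\alpha(i)}\circ(\rho_g)_i .
\end{equation*}
I would prove it in the spirit of Proposition 2.
\begin{itemize}
\item Uniformize $S_i=\Delta/\Gamma_i$ and $S_{\alpha(i)}=\Delta/\Gamma'$.
\item Lift the conformal map $g|_{S_i}$ to some $h\in\mathrm{Aut}(\Delta)$ with $h\Gamma_i h^{-1}=\Gamma'$. Then $(\sigma_g)_i$ corresponds under the identifications $\rho$ to $h^{-1}_*:M(\Gamma_i)\to M(\Gamma')$, and $(\rho_g)_i$ corresponds to the induced map on $Teich$.
\item The conformal naturality of the barycentric extension, $\sigma(h_*\mu)=h_*\sigma(\mu)$ from \S6 of \cite{DE}, shows that $\sigma$ commutes with $h_*$. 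Since $s_\pi\circ\pi=\sigma$, the sections intertwine as in Proposition 2.
\end{itemize}
The one delicate point is that the normalization of $f^\mu$ fixing $1,i,-1$ is altered by conjugation with $h$. I would handle it as in Proposition 2: the barycentric extension commutes with post-composition by Möbius automorphisms of $\Delta$, so the Beltrami coefficient $\sigma(\mu)$ does not depend on the normalization. Proposition 2 also shows the answer is independent of the choice of lift $h$, which settles well-definedness.
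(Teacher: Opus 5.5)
Your proof is correct. The paper gives no proof of its own here; it only cites Proposition~3.9 of \cite{JM2}. Your route is essentially the standard argument behind that citation: split $s(t)$ via Remark~4, use Theorem~A with $\widetilde E=E$ (i.e.\ Corollary~1) to get $\rho_g(\tau)=\tau$ and $g_*(\nu)=\nu$, then carry the conformal naturality of the Douady--Earle extension to each pair of components $S_i\to S_{\alpha(i)}$ through a lift $h$ with $h\Gamma_i h^{-1}=\Gamma'$, as in Proposition~2.

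One slip in your wording: in general the $\alpha(i)$-th component of $\widehat s(\rho_g(\tau))$ is $\widehat s_{\alpha(i)}((\rho_g)_i(\tau_i))$. It becomes $\widehat s_{\alpha(i)}(\tau_{\alpha(i)})$ only after you impose $\rho_g(\tau)=\tau$. Also, you do not need independence of the lift $h$, since any single lift already gives the intertwining identity.
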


See Proposition 3.9 in \cite{JM2}.

\medskip

{\bf Open question.} Is the Douady-Earle section $s: T(E) \to M(\C)$ real-analytic? An affirmative answer to this question, together with the conformal naturality of $s$ will have several applications in holomorphic motions and some questions in geometric function theory. The third author of this paper and Professor Yunping Jiang are presently studying this question.

\section{A maximal holomorphic motion}

We begin with a general lemma.  Suppose $E_1$ and $E_2$ are
closed subsets of $\hatC$ such that $E_1$ is a proper subset of
$E_2$, and as usual, $0, 1, \infty$ belong to $E_1$. If $\mu$ is
in $M(\C)$, then the $E_2$-equivalence class of $w^{\mu}$ is
contained in the $E_1$-equivalence class of $w^{\mu}$. Therefore,
there is a well-defined `forgetful map' $p_{E_2,E_1}$ from
$T(E_2)$ to $T(E_1)$ such that $P_{E_1} = p_{E_2,E_1}\circ
P_{E_2}$. It is easy to see that this forgetful map is a basepoint
preserving holomorphic split submersion. We also have the
universal holomorphic motions $\Psi_{E_1}: T(E_1) \times E_1 \to
\hatC$ and $\Psi_{E_2}: T(E_2) \times E_2 \to \hatC$. We need the
following lemma.

\begin{lemma} Let $V$ be a connected complex Banach manifold with
basepoint $x_0$ and let $f$ and $g$ be basepoint preserving
holomorphic maps from $V$ into $T(E_1)$ and $T(E_2)$,
respectively. Then $p_{E_2,E_1}\circ g = f$ if and only if
$g^*(\Psi_{E_2})$ extends $f^*(\Psi_{E_1})$. \end{lemma}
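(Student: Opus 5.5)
The forward direction is a direct computation with the universal motions. The reverse direction should reduce to showing that an element of $T(E_1)$ is determined by where its universal motion sends the points of $E_1$ over a path. I do not expect to need Theorem 2 here, since $V$ is only assumed connected, not simply connected.

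\emph{Forward direction.} Assume $p_{E_2,E_1}\circ g = f$. Fix $x\in V$ and choose $\mu\in M(\C)$ with $P_{E_2}(\mu)=g(x)$. Then
\[
f(x)=p_{E_2,E_1}(P_{E_2}(\mu))=P_{E_1}(\mu).
\]
For $z\in E_1\subset E_2$ this gives
\[
g^*(\Psi_{E_2})(x,z)=\Psi_{E_2}(P_{E_2}(\mu),z)=w^{\mu}(z)=\Psi_{E_1}(P_{E_1}(\mu),z)=f^*(\Psi_{E_1})(x,z).
\]
Hence $g^*(\Psi_{E_2})$ extends $f^*(\Psi_{E_1})$.

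\emph{Reverse direction.} Assume $w^{\mu_x}=w^{\nu_x}$ on $E_1$ for every $x$, where $P_{E_2}(\mu_x)=g(x)$ and $P_{E_1}(\nu_x)=f(x)$. Put $h=p_{E_2,E_1}\circ g$, a basepoint preserving holomorphic map $V\to T(E_1)$. It satisfies $h^*(\Psi_{E_1})=f^*(\Psi_{E_1})$, since both equal the restriction of $g^*(\Psi_{E_2})$ to $V\times E_1$. So it suffices to show:

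\emph{Claim.} Two basepoint preserving continuous maps $h,f\colon V\to T(E_1)$ with the same pulled-back motion are equal.

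Agreement on $E_1$ alone is not enough, because $E_1$-equivalence is isotopy rel $E_1$ and not equality of the maps on $E_1$. The information that makes the claim true is the continuity in $x$ together with the common basepoint.

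\emph{Plan for the claim (the main obstacle).} Let $A=\{x\in V: h(x)=f(x)\}$. It contains $x_0$ and is closed. I would show it is open and then use connectedness of $V$.

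Openness: near a point of $A$, use local holomorphic sections of $P_{E_1}$, which exist because $P_{E_1}$ is a split submersion. These lift $h$ and $f$ to continuous maps $x\mapsto\mu(x)$ and $x\mapsto\nu(x)$ into $M(\C)$ with $\mu(x_1)=\nu(x_1)$ at the given point $x_1$. Then $w^{\nu(x)}\circ(w^{\mu(x)})^{-1}$ is a continuous family of quasiconformal maps, equal to the identity at $x_1$ and fixing $w^{\mu(x)}(E_1)$ pointwise for every $x$. The task is to show this map is isotopic to the identity rel $w^{\mu(x)}(E_1)$ for $x$ near $x_1$.

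The natural route is the Lieb isomorphism (Proposition 1), which reduces the question to the individual components $S_n$ of $E_1^c$. On each $S_n$ we need local rigidity: a self-map homotopic to the identity through maps fixing the ideal boundary is trivial in $Teich(S_n)$. The standard fact is that a quasiconformal self-map of $\hatC$ that is the identity on $E_1$ and is isotopic to the identity rel $E_1$ through any path is trivial; I would invoke the Earle–McMullen isotopy results (\cite{EM}) for this.

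An alternative route is to note that $h$ and $f$ both classify the same holomorphic motion of $E_1$ over a simply connected neighborhood. The uniqueness part of Theorem 2, applied on small balls, then forces $h=f$ locally, which gives openness directly. I would try this second route first, since it uses only results already stated in the paper.
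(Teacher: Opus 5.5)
Your forward direction is correct, and so is the reduction of the converse to the Claim via $h=p_{E_2,E_1}\circ g$. The open--closed set-up, with one local section of $P_{E_1}$ near $h(x_1)=f(x_1)$, is also fine. The gap is that the step carrying all the content, the openness of $A$, is never proved: you state what must be shown and list two routes, and neither closes it.

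Route one only points to the Lieb isomorphism and \cite{EM}. The ``standard fact'' you quote (a map isotopic to the identity rel $E_1$ is trivial) is just the definition of the basepoint of $T(E_1)$, so it supplies no argument.

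Route two fails as stated. On a ball $U$ about $x_1\neq x_0$ with basepoint $x_1$, the maps $h|_U$ and $f|_U$ send $x_1$ to $h(x_1)$, not to the basepoint of $T(E_1)$. Likewise $h^*(\Psi_{E_1})(x_1,\cdot)=w^{\mu(x_1)}|_{E_1}$ is not the identity. So the uniqueness part of Theorem 2 does not apply without a change-of-basepoint biholomorphism $T(E_1)\to T(w^{\mu(x_1)}(E_1))$, which the paper does not provide. Theorem 2 can be used globally instead. On the universal covering $\pi\colon\widetilde V\to V$, the maps $h\circ\pi$ and $f\circ\pi$ are basepoint preserving, holomorphic, and have the same pulled-back motion, hence they are equal. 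So $V$ failing to be simply connected was not really an obstruction.

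The missing idea is elementary: compose in the other order, so that the fixed set does not move. For $y$ in a path-connected $U$, put $F_y=(w^{\mu(y)})^{-1}\circ w^{\nu(y)}$.

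\begin{itemize}
\item The hypothesis $h^*(\Psi_{E_1})=f^*(\Psi_{E_1})$ says exactly that $w^{\mu(y)}=w^{\nu(y)}$ on $E_1$. Hence every $F_y$ fixes the \emph{fixed} set $E_1$ pointwise, and $F_{x_1}=\mathrm{id}$.
\item Take a path $y(t)$ in $U$ from $x_1$ to $x$. The maps $t\mapsto \mu(y(t))$ and $t\mapsto\nu(y(t))$ are continuous into $M(\C)$. By Lemma 17 of \cite{AB}, together with uniform convergence of inverses of homeomorphisms of $\hatC$, the family $F_{y(t)}$ is jointly continuous.
\item So $F_{y(t)}$ is an isotopy rel $E_1$ from the identity to $F_x$. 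By the definition of $E_1$-equivalence, $P_{E_1}(\mu(x))=P_{E_1}(\nu(x))$, i.e.\ $h(x)=f(x)$.
\end{itemize}

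You can even skip the open--closed step. Use the global continuous basepoint-preserving section $s$ of Proposition 4 and set $\mu=s\circ h$, $\nu=s\circ f$; these agree at $x_0$. The same argument along any path from $x_0$ then gives $h=f$ on all of $V$, with no appeal to the Lieb isomorphism or to \cite{EM}. The paper itself gives no proof of this lemma, only the reference to Proposition 13.1 of \cite{M1}.
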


See Proposition 13.1 in \cite{M1} for a complete proof.

For a finite set $E$, its complement $\hatC \setminus E = \Omega$
is a Riemann sphere with punctures at the points of $E$. There is
a natural identification of $T(E)$ with the classical
Teichm\"uller space $Teich(\Omega)$ (i.e. the Teichm\"uller space
of the sphere with punctures at the points of $E$). See \cite{M1} for details. 
This also follows easily from {\it Lieb isomorphism}. 

\bigskip

{\bf Theorem B.} Let $E$ be the finite set $\{0, 1, \infty,
\zeta_1, \cdot \cdot \cdot, \zeta_n\}$, where $\zeta_i \not=
\zeta_j$ for $i \not= j$ and $n \geq 2$. Consider the universal
holomorphic motion $\Psi_E: T(E) \times E \to \hatC$. Then, the following are true:

\smallskip

(i) The map $\widetilde\Psi_E$ is continuous. 

\smallskip

(ii) The map $\widetilde\Psi_E: T(E) \times \hatC \to
\hatC$ defined by the formula
$$\widetilde\Psi_E(t,z) = w^{s(t)}(z), \qquad (t,z) \in T(E) \times
\hatC$$ 
extends the holomorphic motion $\Psi_E$.

\smallskip

(iii) The Beltrami coefficient $s(t)$ varies real-analytically on $T(E)$.

\smallskip

(iv) The universal
holomorphic motion $\Psi_E: T(E) \times E \to \hatC$ is a maximal
holomorphic motion. 

\medskip

{\bf Proof of Theorem B.} 

First note that
$$\Psi_E(t,z) = \Psi_E(P_E(s(t)), z) = w^{s(t)}(z) =
\widetilde\Psi_E(t,z)$$ for all $(t,z) \in T(E) \times E$.
Therefore, $\widetilde\Psi_E$ extends $\Psi_E$.

\smallskip

The continuity of $\widetilde\Psi_{E}$ follows from Lemma 17 of
\cite{AB}, which says that $w^{\mu_n} \to w^{\mu}$ uniformly in
the spherical metric if $\mu_n \to \mu$ in $M(\C)$. 

\smallskip

By the construction of $\widetilde\Psi_E$, for each $t \in T(E)$, the map 
$(\widetilde\Psi_E)_t: \hatC \to \hatC:= w^{s(t)}:\hatC \to \hatC$ is clearly a quasiconformal map, with Beltrami coefficient $s(t)$, 
and by our discussion in \S5.2, $s(t)$ depends real-analytically on $t$, Note also that, by Corollary 3, the $L^{\infty}$ norm of $s(t)$ depends only on $d_{T(E)}(0,t)$. 

\smallskip

Finally, let $\widehat{E}
= \{0, 1, \infty, \zeta_1, \cdot \cdot \cdot, \zeta_n,
\zeta_{n+1}\}$ where $\zeta_{n+1}$ is any point in $\hatC
\setminus \{0,1, \infty\}$ different from all the points in the
set $E$.  (Using
classical notations, we denote by $Teich(g,n)$ the Teichm\"uller
space of a Riemann surface of genus $g$ and with $n$ punctures.
So, in our example, $T(E)$ is $Teich(0, n+3)$ and $T(\widehat{E})$
is $Teich(0, n+4)$.)

\smallskip

Note that in this case $p_{\widehat{E},E}$ is the usual
puncture-forgetting map in Teichm\"uller theory, which is known to
be a basepoint preserving holomorphic split submersion (see
\cite{Nag}).

\smallskip

Consider the universal holomorphic motion
$$\Psi_E: T(E) \times E \to \hatC.$$
Let $i: T(E) \to T(E)$ be the identity map, which is obviously a
basepoint preserving holomorphic map. Let $\phi: T(E) \times E \to
\hatC$ be the holomorphic motion $i^*(\Psi_E)$ (which is the same
as $\Psi_E$). Suppose $\phi$ extends to a holomorphic motion
$\widehat\phi: T(E) \times \widehat{E} \to \hatC$. Then, since
$T(E)$ is simply connected (see Corollary 2), it follows by
Theorem 2 that there exists a unique basepoint preserving
holomorphic map $f: T(E) \to T(\widehat{E})$ such that
$f^*(\Psi_{\widehat{E}}) = \widehat\phi$, where $\Psi_{\widehat
E}: T(\widehat E) \times \widehat E \to \hatC$ is the universal
holomorphic motion of $\widehat E$. Since $\widehat\phi$ extends
$\phi$, it follows by Lemma 1, that $p_{\widehat E, E} \circ f =
i$. That means, the map $p_{\widehat E, E}$ has a holomorphic
section $f$. That contradicts a result of Earle and Kra in
\cite{EK}. Hence, we conclude that the universal holomorphic
motion $\Psi_E: T(E) \times E \to \hatC$ is a maximal holomorphic
motion.\qed

\section{Holomorphic families of Jordan curves}

{\bf Theorem C.} Let $V$ be a simply connected complex Banach manifold with a basepoint $x_0$. Let $\gamma_{x_0}$ be a closed Jordan curve, and let $E$ be a finite subset of $\gamma_{x_0}$, containing the points $0$, $1$, and $\infty$. Let $\phi: V \times E \to \hatC$ be a holomorphic motion and let $E_x: = \phi_x(E)$. Then we have the following:

(i) For each $x$ in $V$, there is a quasiconformal map $\widetilde\phi_x: \hatC \to \hatC$ such that $\widetilde\phi_x(E) = E_x$. 
and $\widetilde\phi_x(\gamma_{x_0})$ is a closed Jordan curve $\gamma_x$, and $E_x \subset \gamma_x$. 

\smallskip

(ii) The Beltrami coefficient $\mu_x$ of $\widetilde\phi_x$ varies real-analytically on $V$.

\smallskip

(iii)  The $L^{\infty}$ norm of $\mu_x$ is bounded above by a number
less than 1, that depends only on the Kobayashi distance from $x$
to $x_0$, denoted by $\rho_V(x,x_0)$.

\medskip

{\bf Proof of Theorem C.} By Theorem 2, there exists a unique holomorphic map $f: V \to T(E)$ such that $f^*(\Psi_E) = \phi$
where $\Psi_E: T(E) \times E \to \hatC$ is the universal holomorphic motion of $E$ defined in \S4.1. By our discussion in \S5.2, there exists a real-analytic map $s: T(E) \to M(\C)$ such that $P_E \circ s$ is the identity map on $T(E)$. Let $\widetilde f = s \circ f$. Define the map $\widetilde\phi: V \times \hatC \to \hatC$ as follows: 
$$\widetilde\phi(x,z): = \widetilde\phi_x(z) = w^{\widetilde f(x)}(z) \qquad \mbox { for all } z \in \hatC.$$
Then, for each $z$ in $E$, we have
$$\phi_x(z) = \phi (x,z) = \Psi_E(f(x), z) = \Psi_E(P_E(\widetilde f(x)), z) = w^{\widetilde f(x)}(z) = \widetilde\phi_x(z).$$
Hence, for each $x$ in $V$, $\widetilde\phi_x(E) = \phi_x(E) = E_x$. Since for each $x$ in $V$, $\widetilde\phi:\hatC \to \hatC$ is a homeomorphism, $\widetilde\phi(\gamma_{x_0}) = \gamma_x$ is a Jordan curve, and clearly, $E_x \subset \gamma_x$. This proves (i) and (ii).

\smallskip

Finally, let $x$ be in $V$ where $x \not= x_0$. Since the
Teichm\"uller metric on $T(E)$ is the same as its Kobayashi metric
(see \S3.3), we have $d_{T(E)}(0,t) \leq \rho_V(x_0,x)$ where
$f(x) = t$ and $0$ denotes the basepoint in $T(E)$. Choose an
extremal $\mu$ in $M(\C)$ such that $P_E(\mu) = f(x)$. This means
that $d_{T(E)}(0,P_E(\mu)) = d_M(0_M,\mu)$ where $0_M$ denotes the
basepoint in $M(\C)$. We have
$$d_{T(E)}(f(x_0), f(x)) =
\frac{1}{2}\log\frac{1+\Vert\mu\Vert_{\infty}}{1-\Vert\mu\Vert_{\infty}}
\leq \rho_V(x_0,x)$$ which gives
$$\Vert\mu\Vert_{\infty} \leq \frac{\exp(2\rho_V(x_0,x))
-1}{\exp(2\rho_V(x_0,x)) +1} < 1.$$

\smallskip

Since $\widetilde\phi_x(z) = w^{\widetilde f(x)}(z)$, it follows
from Corollary 3 that $\Vert \widetilde{f}(x)\Vert_{\infty}$ is
bounded above by a number between 0 and 1, that depends only on
$\rho_V(x_0,x)$. This proves (iii). \qed

\begin{remark}
The map $\widetilde\phi: V \times \hatC \to \hatC$ is continuous. For, suppose $(x_n,z_n) \to (x,z)$ where $x_n$, $x$ are in $V$ and
$z_n$, $z$ are in $\hatC$. Since $\widetilde{f}: V \to M(\C)$ is real-analytic, and so
continuous, we have $\widetilde{f}(x_n) \to \widetilde{f}(x)$. The
continuity of $\widetilde\phi$ now follows exactly like the proof of (ii) in Theorem B. 

\end{remark}

\subsection{Some historical remarks}

Let $\gamma_{0}$ be a closed Jordan curve in $\hatC$ and let $\phi: \Delta \times \gamma_{0} \to \hatC$ be a holomorphic motion; here, the basepoint of the parameter space $\Delta$ is obviously $0$. For each $\lambda \in \Delta$, by the $\lambda$-lemma, 
$$\gamma_{\lambda} = \{ \phi_{\lambda}(z): z \in \gamma_{0}\}$$ 
is a closed Jordan curve in $\C$. 

\medskip

Consider the family $\{\gamma_{\lambda}: \lambda \in \Delta\}$ generated by a holomorphic motion $\phi: \Delta \times \gamma_0 \to \hatC$. In
\cite{GP}, it was proved that if $\gamma_{0}$ is a quasicircle, then each $\gamma_{\lambda}$ is also a quasicircle. Later, in \cite{PR}, the authors proved the following generalization.

\begin{theorem}[{\bf Pommerenke and Rodin}] Let $\{\gamma_{\lambda}: \lambda \in \Delta\}$ be generated by a holomorphic motion of $\gamma_0$. For each $\lambda \in \Delta$ there exists a quasiconformal homeomorphism of $\hatC$ which maps $\gamma_0$ onto $\gamma_{\lambda}$. 
\end{theorem}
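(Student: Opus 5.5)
The plan is to derive the Pommerenke--Rodin statement from the machinery already set up, namely the universal property of $T(E)$ and Slodkowski's extension theorem, rather than from any fine analysis of the curves. Since $\Delta$ is simply connected, I would first apply Slodkowski's extension theorem to the holomorphic motion $\phi: \Delta \times \gamma_0 \to \hatC$. This produces a holomorphic motion $\widehat\phi: \Delta \times \hatC \to \hatC$ extending $\phi$.

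For each fixed $\lambda \in \Delta$, the map $\widehat\phi_\lambda$ is then a homeomorphism of $\hatC$ onto itself. By the discussion of the case $E = \hatC$ in \S4.2, together with Remark 3 (Theorem 2 applied with $V = \Delta$), there is a holomorphic basepoint preserving map $f: \Delta \to M(\C)$ with $\widehat\phi_\lambda = w^{f(\lambda)}$, after the usual normalization. This says $\widehat\phi_\lambda$ is quasiconformal with Beltrami coefficient $f(\lambda)$. Since $\widehat\phi_\lambda$ restricts to $\phi_\lambda$ on $\gamma_0$, we get $\widehat\phi_\lambda(\gamma_0) = \gamma_\lambda$, which is the desired quasiconformal homeomorphism.

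Two bookkeeping points need attention. First, the motion must be normalized. I would choose three points $a,b,c$ on $\gamma_0$, conjugate by the Möbius maps $M_\lambda$ as in \S4 so that they are fixed at $0,1,\infty$, and then undo the normalization at the end. Composing a quasiconformal map with Möbius maps keeps it quasiconformal, and the $M_\lambda$ depend holomorphically on $\lambda$, so the normalized motion is still holomorphic. Second, one can bound the dilatation quantitatively. By the Schwarz--Pick lemma applied to $f$, the Teichmüller distance $d_M(0, f(\lambda))$ is at most the hyperbolic distance from $0$ to $\lambda$. This gives $\Vert f(\lambda)\Vert_\infty \le |\lambda|$, so $\widehat\phi_\lambda$ is $\frac{1+|\lambda|}{1-|\lambda|}$-quasiconformal.

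The main obstacle is the existence of a global extension to all of $\hatC$. The classical $\lambda$-lemma alone gives only quasiconformality of $\phi_\lambda$ on the set $\gamma_0$, together with an extension to a neighbourhood or over a smaller disk. Slodkowski's theorem is exactly what removes this difficulty over $\Delta$. If one wanted to avoid it, one would instead use Theorem 2 to obtain a map $\Delta \to T(\gamma_0)$ and take $w^{s(f(\lambda))}$ with the Douady--Earle section $s$ of Proposition 4. This would also give a quasiconformal map of $\hatC$ agreeing with $\phi_\lambda$ on $\gamma_0$, since $P_{\gamma_0}\circ s$ is the identity.
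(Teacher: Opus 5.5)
Your argument is correct. The paper itself gives no proof of this statement; it only cites Theorem 2 of Pommerenke--Rodin. Your main route is nonetheless the one the paper points to in its closing remark for $V=\Delta$:
\begin{itemize}
\item Slodkowski's theorem extends $\phi$ to a holomorphic motion of $\hatC$ over $\Delta$.
\item Theorem 2 with $E=\hatC$, applied after the M\"obius normalization, identifies $\widehat\phi_\lambda$ with $w^{f(\lambda)}$ for a holomorphic $f\colon\Delta\to M(\C)$.
\item The Schwarz lemma then gives $\Vert f(\lambda)\Vert_\infty\le|\lambda|$.
\end{itemize}
So you obtain more than the statement asks for: holomorphic dependence of the Beltrami coefficient on $\lambda$, and the explicit dilatation bound $\frac{1+|\lambda|}{1-|\lambda|}$.

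Your fallback through $T(\gamma_0)$ and the Douady--Earle section $s$ is the argument the paper uses for Theorem 4, modelled on the proof of Theorem C. There your choice of $a,b,c$ on $\gamma_0$ is genuinely needed, since $T(\gamma_0)$ requires $0,1,\infty\in\gamma_0$. This route avoids Slodkowski's theorem and works over any simply connected complex Banach manifold. However, real-analyticity of $s$ on $T(E)$ for infinite $E$ is the paper's open question. So this route yields only continuous dependence of the Beltrami coefficient, with a norm bound depending on the Kobayashi distance (via Corollary 3) rather than the explicit bound $|\lambda|$.
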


See Theorem 2 in \cite{PR}. 

\medskip

Here is a much stronger version of this theorem of Pommerenke and Rodin.

\begin{theorem}
Let $V$ be a simply connected complex Banach manifold with a basepoint $x_0$. Let $\gamma_{x_0}$ be a closed Jordan curve, and let 
$\phi: V \times \gamma_{x_0} \to \hatC$ be a holomorphic motion. Let $\{\gamma_x: x \in V\}$ be generated by the holomorphic motion $\phi$. 
Then we have the following:

\begin{itemize}

\item[(i)]For each $x$ in $V$, there is a quasiconformal map $\widetilde\phi_x: \hatC \to \hatC$ which maps $\gamma_{x_0}$ onto $\gamma_x$. 

\item[(ii)]The Beltrami coefficient $\mu_x$ of $\widetilde\phi_x$ varies continuously with respect to $x$ in $V$. 

\item[(iii)]The $L^{\infty}$ norm of $\mu_x$ is bounded above by a number less than 1, that depends only on the Kobayashi distance from $x$
to $x_0$, denoted by $\rho_V(x,x_0)$.

\end{itemize}

\end{theorem}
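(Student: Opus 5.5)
We follow the proof of Theorem C closely, with $E$ now the closed set $\gamma_{x_0}$ (which contains $0$, $1$, $\infty$ after the normalization of \S4) instead of a finite set. Since $V$ is simply connected, Theorem 2 provides a unique basepoint preserving holomorphic map $f: V \to T(\gamma_{x_0})$ with $f^*(\Psi_{\gamma_{x_0}}) = \phi$, that is,
$$\phi(x,z) = \Psi_{\gamma_{x_0}}(f(x),z) \qquad \mbox{for all } (x,z) \in V \times \gamma_{x_0}.$$
Let $s: T(\gamma_{x_0}) \to M(\C)$ be the Douady-Earle section of Proposition 4. It is continuous, basepoint preserving, and satisfies $P_{\gamma_{x_0}} \circ s = \mathrm{id}$. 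We then set
$$\widetilde f = s \circ f : V \to M(\C), \qquad \widetilde\phi_x = w^{\widetilde f(x)}.$$

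For (i), take $z \in \gamma_{x_0}$. Then
$$\widetilde\phi_x(z) = w^{\widetilde f(x)}(z) = \Psi_{\gamma_{x_0}}\big(P_{\gamma_{x_0}}(s(f(x))),z\big) = \Psi_{\gamma_{x_0}}(f(x),z) = \phi_x(z).$$
So the quasiconformal map $\widetilde\phi_x$ of $\hatC$ restricts to $\phi_x$ on $\gamma_{x_0}$, and therefore maps $\gamma_{x_0}$ onto $\gamma_x$. This also shows that $\gamma_x$ is a Jordan curve, since it is the homeomorphic image of one.

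For (ii), $x \mapsto \widetilde f(x)$ is continuous as the composition of the holomorphic map $f$ with the continuous map $s$. Note that only continuity is claimed here, because real-analyticity of $s$ on $T(E)$ for a general closed $E$ is the open question of \S5.5. The Beltrami coefficient $\mu_x$ of $\widetilde\phi_x$ is exactly $\widetilde f(x)$.

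For (iii), we use that the Teichm\"uller metric on $T(\gamma_{x_0})$ coincides with its Kobayashi metric (\S3.3). Holomorphic maps do not increase Kobayashi distance, so with $t = f(x)$ we get
$$d_{T(\gamma_{x_0})}(0,t) \le \rho_V(x_0,x).$$
Corollary 3 then gives $\Vert s(t)\Vert_\infty \le \max(k, c(k))$, where $k = \tanh \rho_V(x_0,x)$ bounds the norm of an extremal representative, as in the proof of Theorem C. Both $k$ and $c(k)$ depend only on $\rho_V(x_0,x)$, and $c(k) < 1$.

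The main obstacle is the monotonicity behind this last bound. Corollary 3 as stated only says the bound depends on $d_{T(E)}(0,t)$, so we must check that it can be taken nondecreasing in the distance; only then does the inequality $d \le \rho_V$ transfer to a bound depending on $\rho_V$ alone. We handle this by choosing $c(k)$ in Proposition 7 of \cite{DE} to be nondecreasing in $k$, replacing it by $\sup_{k' \le k} c(k')$ if necessary; this supremum is still $<1$, since the bound from \cite{DE} is uniform on $\{k' \le k\}$. We also need to confirm that Theorem 2 applies to the non-finite closed set $\gamma_{x_0}$, which it does, since it is stated for arbitrary closed $E$.
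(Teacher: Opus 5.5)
Your proposal is correct and follows the paper's own route: the paper proves this theorem by repeating the proof of Theorem C with $E=\gamma_{x_0}$, composing the map $f\colon V\to T(\gamma_{x_0})$ from Theorem 2 with the continuous Douady--Earle section $s$ of Proposition 4, and bounding $\Vert s(f(x))\Vert_\infty$ via Corollary 3 together with the equality of the Teichm\"uller and Kobayashi metrics. Your explicit check that $c(k)$ can be taken nondecreasing makes precise a step the paper leaves implicit when it passes from $d_{T(E)}(0,t)\le\rho_V(x_0,x)$ to a bound depending only on $\rho_V(x_0,x)$; this monotonicity is in fact automatic, since Proposition 7 of \cite{DE} is applied under the hypothesis $\Vert\mu\Vert_\infty\le k$.
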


The proof is exactly similar to the proof of Theorem C.

\begin{remark}
An affirmative answer to the open question stated at the end of
§5 will prove that the Beltrami coefficient $\mu_x$ of $\widetilde\phi_x$ varies real-analytically on $V$ . That would be a substantial improvement of Theorem 3, and also of Theorem 4.

\end{remark}

\begin{remark}
When $V = \Delta$, the Beltrami coefficient $\mu_x$ of $\widetilde\phi_x$ varies holomorphically with respect to $x$ in $V$, and $\Vert\mu_x\Vert_{\infty} \leq \vert x\vert$. This is a consequence of Slodkowski's theorem. 

\end{remark}

\end{document}